\theoremstyle{plain}
\newtheorem{theorem}{Theorem}[section]
\newtheorem*{theorem_RWRE}{Theorem RWRE}
\newtheorem*{theorem_ERW}{Theorem ERW}
\newtheorem{lemma}[theorem]{Lemma}
\newtheorem{proposition}[theorem]{Proposition}
\theoremstyle{remark}
\newtheorem{remark}[theorem]{Remark}
\newtheorem{example}[theorem]{Example}
\theoremstyle{definition}
\newtheorem{Definition}[theorem]{Definition}
\newtheorem{assumptionletter}{Assumption}
\newcommand{\N}{\mathbb{N}}
\newcommand{\Z}{\mathbb{Z}}
\newcommand{\R}{\mathbb{R}}
\DeclareMathOperator{\Var}{Var}
\begin{document}

\title[Perturbing transient RWRE with cookies of maximal strength]{Perturbing transient Random Walk in a Random Environment with cookies of maximal strength}

\author{Elisabeth Bauernschubert}
\address{Mathematisches Institut\\
    Eberhard Karls Universit\"at T\"ubingen\\
    Auf der Morgenstelle 10\\
    72076 T\"ubingen, Germany}
\email{elisabeth.bauernschubert@uni-tuebingen.de}

\date{\today}

\subjclass[2000]{60J80 (60J85, 60K37)}
\keywords{Excited random walk in a random environment, cookies of strength 1, recurrence, transience, subcritical branching process in a random environment with immigration.}

\begin{abstract}
We consider a left-transient random walk in a random environment on $\Z$ that will be disturbed by cookies inducing a drift to the right of strength 1. The number of cookies per site is i.i.d.\ and independent of the environment.
Criteria for recurrence and transience of the random walk are obtained. For this purpose we use subcritical branching processes in random environments with immigration and formulate criteria for recurrence and transience for these processes.
\end{abstract}

\maketitle

\section{Introduction}
We investigate random walks with random transition probabilities. Therefore,
set $\Omega:=([0,1]^{\N})^{\Z}$ with elements $\omega=((\omega(x,i))_{i\geq 1})_{x\in\Z}$. Suppose that elements from $\Omega$ are chosen at random according to a probability measure $\mathbb{P}$ on $\Omega$ with corresponding expectation operator $\mathbb{E}$. For fixed environment $\omega\in\Omega$ and $z\in\Z$ define a nearest-neighbour random walk $(S_n)_{n\geq0}$ on a suitable probability space $\Omega'$ with probability measure $P_{z,\omega}$, which satisfies:
\begin{align*}
P_{z,\omega}[S_0=z]&=1,\\
P_{z,\omega}[S_{n+1}=S_n+1|(S_m)_{1\leq m\leq n}]&= \omega(S_n,\#\{m\leq n:\: S_m=S_n\}),\\
P_{z,\omega}[S_{n+1}=S_n-1|(S_m)_{1\leq m\leq n}]&= 1-\omega(S_n,\#\{m\leq n:\: S_m=S_n\}).
\end{align*}
In this way $\omega(x,i)$ is the transition probability from $x$ to $x+1$ upon the $i^{th}$ visit at site $x$. Furthermore let us denote the so-called \emph{annealed} or \emph{averaged} probability measure by $P_x[\cdot]:=\mathbb{E}[P_{x,\omega}[\cdot]]$, with corresponding expectation operator $E_x$. The process $(S_n)_{n\geq0}$ is called \emph{recurrent} (\emph{transient}) if $S_n=0$ infinitely often ($\lim_{n\to\infty}S_n\in\{\pm \infty\}$) $P_0$-a.s.

In the case where for $\mathbb{P}$-a.e.\ $\omega$ there exists a sequence $(p_z)_{z\in\Z}\in [0,1]^{\Z}$ such that $\omega(z,i)=p_z$ for all $i\geq 1$, $z\in\Z$, and $(p_z)_{z\in\Z}$ i.i.d.\ under $\mathbb{P}$, $(S_n)_{n\geq0}$ is called a one-dimensional \emph{random walk in a random environment} (RWRE). In general, we refer the reader to \cite{Solomon75, Zeitouni} for results and information about RWRE, but give here the recurrence/transience criteria for RWRE found in \cite[Theorem (1.7)]{Solomon75}.
\begin{theorem_RWRE}[Solomon 1975]
 \label{th:RWRE} 
Consider an RWRE $(S_n)_{n\geq0}$ and assume that $0\leq p_z < 1$ for all $z\in\Z$ or $0< p_z\leq 1$ for all $z\in\Z$. Let $\mathbb{E}[\log \rho_0]$ be well defined, where $\rho_0:=(1-p_0)p_0^{-1}$.
\begin{enumerate}
\item[(i)] If $\mathbb{E}[\log \rho_0]<0$, then $\lim_{n\to\infty}S_n = +\infty$ $P_0$-a.s.
 \item[(ii)] \label{RWRE_2} If $\mathbb{E}[\log \rho_0]>0$, then $\lim_{n\to\infty}S_n = -\infty$ ${P_0}$-a.s.
 \item[(iii)] If $\mathbb{E}[\log \rho_0]=0$, then $-\infty=\liminf_{n\to\infty}S_n < \limsup_{n\to\infty}S_n= \infty$ ${P_0}$-a.s.
\end{enumerate}
\end{theorem_RWRE}

We can see that the RWRE is a special case of a multi-excited (cookie) random walk (ERW) with infinitely many cookies per site. In the ERW-model $\omega(z,i)$ is not restricted to be constant in $i$ for all $z\in\Z$ $\mathbb{P}$-a.s.
Excited random walks go back to the work of Benjamini and Wilson in \cite{Benjamini}. Further studies of these processes and an extension to multi-excited random walks have been made by Zerner in \cite{Zerner05, Zerner06}, by Basdevant/Singh in \cite{Basdevant08b, Basdevant08a} and by Kosygina/Zerner in \cite{Zerner08}. In \cite[Theorem 12]{Zerner05} Zerner proves the following recurrence and transience criteria.

\begin{theorem_ERW}[Zerner 2005]
 Assume that $\omega\in ([{1}/{2},1]^{\N})^{\Z}$ $\mathbb{P}$-a.s.\ and $(\omega(x,\cdot))_{x\geq 0}$ is stationary and ergodic under $\mathbb{P}$. Then, $(S_n)_{n\geq0}$ is recurrent if and only if $\mathbb{E}[\sum_{i\geq 1}(2 \omega(0,i)-1)]\leq 1$.
\end{theorem_ERW}

In \cite{Zerner08} Kosygina and Zerner discussed an ERW with a bounded number of cookies, i.e.\ $\omega(z,i)={1}/{2}$ for all $i>K$ for all $z$ $\mathbb{P}$-a.s.\ for some constant $K$, and showed the following in \cite[Theorem 1]{Zerner08}.
\begin{theorem_ERW}[Kosygina, Zerner 2008]
 Let $K\in\N$ and $\omega(z,i)={1}/{2}$ for all  $i>K$ for all $z$ $\mathbb{P}$-a.s. Assume that $(\omega(z,\cdot))_{z\in\Z}$ is i.i.d., $\mathbb{E}[\prod_{n=1}^K{\omega(0,n)}]>0$ and $\mathbb{E}[\prod_{n=1}^K{(1-\omega(0,n))}]>0$.
If $\delta:=\mathbb{E}[\sum_{n=1}^K{(2\omega(0,n)-1)}]\in[-1,1]$ then $(S_n)_{n\geq0}$ is recurrent. If $\delta>1$ ($\delta<-1$ respectively) then $(S_n)_{n\geq0}$ is transient to the right (left respectively).
\end{theorem_ERW}

The recurrence and transience criteria for the RWRE and for the ERW seem to be quite different. So the challenging question arises whether one can find a unifying criterion. In the present paper we begin with a small step in our undertaking and consider the following combination of these processes which we will call \emph{excited random walk in a random environment} (ERWRE for short). For an illustration of the model discussed in this paper see Figure~\ref{fig:model}.

\begin{figure}[!ht]%
\centering

\scalebox{0.65}{\includegraphics[width=\columnwidth]{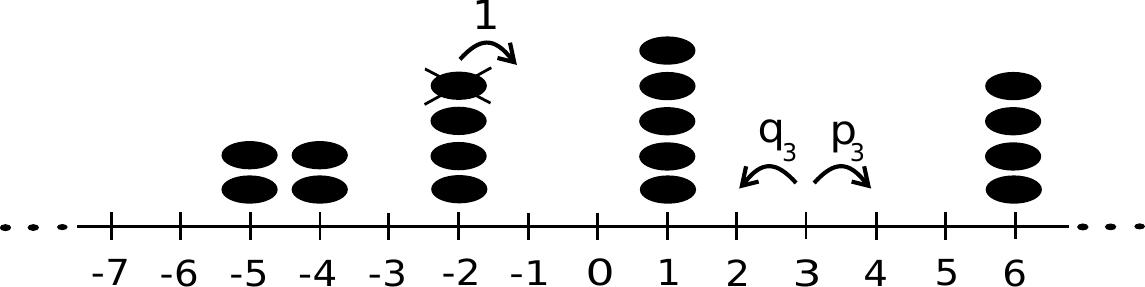}}
\caption{Model of the random walk.}
{\parbox{0.85\textwidth}{\small If there are cookies at his current position $z\in\Z$, the random walker removes one and makes a step to $z+1$. If there is no cookie he jumps to the right with probability $p_z$ and to the left with probability $q_z:=1-p_z$.}}
\label{fig:model}
\end{figure}

Consider an environment $(p_z)_{z\in\Z}\in (0,1)^{\Z}$ and put $M_z$ cookies on every integer $z\in\Z$. Now a nearest neighbour random walk $(S_n)_{n\geq0}$ is started at 0 with the following transition probabilities.
If the random walker comes to site $z$ and if there is still at least one cookie on this site, he removes one cookie and jumps to $z+1$. Otherwise he makes a step to the right with probability $p_z$ and to the left with probability $q_z:=1-p_z$.\\
The cookies in our model have maximal strength and induce a drift to the right. On the other hand,  an environment $(p_z)_{z\in\Z}$ is assumed that makes an RWRE (i.e.\ without cookies) tend to $-\infty$. So jump- and cookie-environment cause a drift in opposite directions and the question naturally arises which drift is stronger in certain cases. We found criteria for transience
and recurrence of the process.

Let us define the number of cookies of strength 1 at site $x\in\Z$ by
\begin{align*}
&M_x:=0 \qquad\text{if }\quad\{i\geq 1:  \; \omega(x,j)=1\;\forall 1\leq j\leq i\}=\emptyset ,\\
&M_x:=\sup\{i\geq 1:  \; \omega(x,j)=1\;\forall 1\leq j\leq i\} \qquad\text{otherwise.}
\end{align*}

For further discussions we postulate the following.
\begin{assumptionletter}
         \label{as_A}
         \renewcommand{\theenumi}{A.\arabic{enumi}}
         \renewcommand{\labelenumi}{\theenumi}
         \begin{enumerate}
                 \item \label{as_A1} It holds $\mathbb{P}$-a.s.\ that for all $x\in\Z$ there exists $p_x\in (0,1)$ such that $\omega(x,i)=p_x$ for all $i> M_x$.
                 \item \label{as_A2} $(p_z)_{z\in\Z}$ is identically distributed and $\{p_z, M_z, z\in\Z\}$ is independent under $\mathbb{P}$.
		 \item \label{as_A3} $\mathbb{E}[|\log (\rho_0)|]<\infty$ and $\mathbb{E}[\log(\rho_0)]>0$ where $\rho_x:=({1-p_x})p_x^{-1}$ for $x\in\Z$.
		 \item \label{as_A4} $\mathbb{P}[M_x=\infty]=0$ and $\mathbb{P}[M_x=0]>0$ for all $x\in\Z$.
         \end{enumerate}
\end{assumptionletter} 

Recall that \ref{as_A3} implies that an RWRE is left-transient, i.e.\ $S_n\to -\infty$ a.s. The goal of this paper is to show the following recurrence and transience criteria.

\begin{theorem}
\label{theorem1}
Let Assumption \ref{as_A} 
hold and assume that
$(p_z, M_z)_{z\in\Z}$ is i.i.d.\ and $\mathbb{E}[p_0^{-1}]<\infty$.
\begin{itemize}
 \item[(i)] If $\mathbb{E}[(\log M_0)_+]<\infty$, then $\lim_{n\to\infty}S_n=-\infty$ $P_0$-a.s.
 \item[(ii)]  If $\mathbb{E}[(\log M_0)_+]=\infty$ and if $\limsup_{t\to\infty}(t\cdot\mathbb{P}[\log{M_0}>t])<\mathbb{E}[\log \rho_0]$, then $S_n=0$ infinitely often $P_0$-a.s.
 \item[(iii)] If
$\liminf_{t\to\infty}(t\cdot\mathbb{P}[\log{M_0}>t])>\mathbb{E}[\log \rho_0]$, then $\lim_{n\to\infty}S_n=+\infty$ $P_0$-a.s. 
\end{itemize}
\end{theorem}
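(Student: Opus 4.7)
The plan is to reduce the problem to the recurrence/transience of a subcritical branching process in a random environment with immigration (BPREI). Fix $n \geq 1$, start the walk at $0$, and set $T_{-n} := \inf\{m \geq 0 : S_m = -n\}$. For $k \in \{0, 1, \ldots, n\}$ let $Z_k$ denote the number of right-jumps from site $-k$ performed during $[0, T_{-n})$. An edge-balance count gives $Z_n = 0$, and for $k \in \{1, \ldots, n-1\}$ the site $-k$ is visited $Z_k + Z_{k+1} + 1$ times, of which exactly $Z_{k+1} + 1$ end in a left-jump. The first $M_{-k}$ departures from $-k$ are forced right-jumps (cookies), while subsequent departures are Bernoulli$(p_{-k})$, so conditionally on the environment and on $Z_{k+1}$,
\begin{equation*}
Z_k \;=\; M_{-k} \;+\; \sum_{j=1}^{Z_{k+1}+1} G_j^{(k)}, \qquad G_j^{(k)} \stackrel{\text{i.i.d.}}{\sim} \mathrm{Geom}(p_{-k}),
\end{equation*}
where $\mathrm{Geom}(p)$ is supported on $\{0, 1, 2, \ldots\}$ with mean $p/(1-p) = 1/\rho$. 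Running the recursion from $k = n$ downward defines a BPREI in the i.i.d.\ environment $(p_{-k}, M_{-k})$, with geometric offspring of random mean $1/\rho_{-k}$ and per-generation immigration $M_{-k} + G_0^{(k)}$. Assumption~\ref{as_A3} makes the BPREI subcritical.

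Next I would develop recurrence/transience criteria for such a BPREI in its own right (as the abstract advertises). Heuristically, the immigration injected at generation $j$ is contracted by a factor $\exp(-\sum_{\ell = j}^{k-1} \log \rho_{-\ell})$ by generation $k$, which by the law of large numbers is $\approx e^{-(k-j)\,\mathbb{E}[\log \rho_0]}$. Thus the BPREI is comparable to a perpetuity-type sum $\sum_j M_{-j}\, e^{-(k-j)\,\mathbb{E}[\log \rho_0]}$. A Borel--Cantelli analysis of $\max_j\bigl(\log M_{-j} - j\,\mathbb{E}[\log \rho_0]\bigr)$ then yields tightness of the process when $\mathbb{E}[(\log M_0)_+] < \infty$, and a sharp dichotomy at the critical level $\mathbb{E}[\log \rho_0]$ for the tail $t\,\mathbb{P}[\log M_0 > t]$ when that expectation is infinite. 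The moment assumption $\mathbb{E}[p_0^{-1}] < \infty$ is what controls the geometric summands uniformly.

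These verdicts then translate back to $(S_n)$. In case~(i), tightness of $Z_0^{(n)}$ gives $T_{-n} < \infty$ a.s.\ for every $n$, hence $\liminf S_n = -\infty$ a.s.; combined with the independence in Assumption~\ref{as_A} and a 0-1 law for cookie walks in i.i.d.\ environments (in the spirit of \cite{Zerner05}), this upgrades to $S_n \to -\infty$. In case~(iii), $P_0[T_{-n} < \infty] \to 0$, so $\liminf S_n > -\infty$ a.s.; exhaustion of all cookies above the running minimum then forces $S_n \to +\infty$. Case~(ii) is the critical regime where $Z_0^{(n)} \to \infty$ in distribution yet $T_{-n} < \infty$ a.s., giving $\liminf S_n = -\infty$, while the heavy-tailed immigration produces rare but arbitrarily large right-excursions, ensuring $\limsup S_n = +\infty$ and hence recurrence.

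The main obstacle will be the BPREI analysis: extracting the exact threshold $\mathbb{E}[\log \rho_0]$ from the interplay between the perpetuity-type series and the running minimum of the random walk $S_j = \sum_{\ell < j} \log \rho_{-\ell}$ requires delicate tail matching on both sides. A secondary delicate point is distinguishing recurrence from left-transience on the walk side (case~(ii) vs.\ (i)), where one must show that the heavy-tailed immigration beats the left-transient drift along a suitable subsequence; this is where the independence of $(p_z)_z$ from $(M_z)_z$ in Assumption~\ref{as_A} is used essentially.
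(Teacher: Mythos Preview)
Your branching-process construction is on the wrong half-line. You count right-jumps from sites $-k$ before $T_{-n}$, and the recursion $Z_k = M_{-k} + \sum_j G_j^{(k)}$ involves only the environment at non-positive sites; it says nothing about what happens during excursions into $\N$. In particular, your case~(iii) claim ``$P_0[T_{-n}<\infty]\to 0$'' cannot come from this BPREI: if one removes the cookies on $\N$, the walk reaches every $-n$ almost surely no matter how heavy the left-side cookie stacks are (each $M_{-k}<\infty$ merely delays the passage through $-k$; this is Lemma~\ref{lemma3*}). The only way $T_{-n}=\infty$ can occur is that some right-excursion from $0$ fails to return, and that is governed entirely by $(p_k,M_k)_{k\geq 1}$, which your recursion never touches. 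The same gap affects case~(i): ``tightness of $Z_0^{(n)}$ gives $T_{-n}<\infty$'' does not follow, and all three cases still require a separate argument that right-excursions are (or are not) finite.

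The paper proceeds in the mirror-image way. The BPREI is built on the \emph{positive} half-line: $V_k$ counts up-crossings $k\to k{+}1$ before the walk first returns to $0$, and recurrence of $(V_k)$ is exactly the statement that every right-excursion is finite (Lemma~\ref{lem:ZshRekurrenzERWundGWPI}, Proposition~\ref{theorem:ERWRE_right}). This is what separates (iii) from (i)--(ii). The left half-line is then handled \emph{without} a branching process, via a gambler's-ruin computation plus Borel--Cantelli and a random-power-series lemma (Proposition~\ref{RWRE_ERW_left}), giving the $\mathbb{E}[(\log M_0)_+]$ dichotomy between (i) and (ii). Your left-side BPREI could in principle recover this last dichotomy, but not through the transience/recurrence results you sketch: under the hypothesis of case~(ii) your process satisfies the condition of Theorem~\ref{theorem:GWPI_in_RE2} and is therefore \emph{recurrent}, yet $Z_0^{(n)}\to\infty$ (the chain is only null-recurrent). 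You would need a positive-versus-null-recurrence criterion instead, which is a different and harder question than what the paper develops.
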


In \cite{Basdevant08b, Basdevant08a, Zerner08, Zerner05, Zerner06} the random walker steps to the right or to the left with equal probability at sites without cookies. Furthermore, the number of cookies per site is bounded in \cite{Basdevant08b, Basdevant08a, Zerner08}.
In the case of ERWs in two and higher dimensions, Holmes investigates in \cite{Holmes11} the velocity, allowing infinitely many cookies, and discusses a question of ``which drift wins?'' for a certain subclass of models, so-called ``excited against the tide'' walks.
Schapira considers in his unpublished paper \cite{Schapira08} a model similar to the one used in this paper, where the cookies induce a positive drift and the walker gets a negative drift on sites without cookies. But the number of cookies per site is still bounded.
The novelty in our model is that the number of cookies per site is not necessarily bounded and that we allow a random environment for the transition probabilities on sites without cookies. However, cookies of maximal strength are considered only.

For the proof of Theorem \ref{theorem1} a well-known relationship between branching processes and random walks is used. This method also has been employed in \cite{Basdevant08b, Basdevant08a, Zerner08}. In our case we have to deal with a subcritical branching process in a random environment with immigration and  we also intend to prove a recurrence/transience criterion for this process. As can be seen in our further discussion there is some similarity to random difference equations.\\
Since the late 1960s several authors worked on branching processes in random environments (BPRE for short), for example Athreya, Karlin, Smith and Wilkinson in \cite{Athreya_BPRE2, Athreya_BPRE1, Smith69}.
In \cite{Pakes79} Pakes proved some recurrence/transience criteria for subcritical branching processes with immigration, but without a random environment. The results presented in our paper differ from the ones in \cite{Pakes79}, mainly because of the extension to a random environment, but also if we assumed fixed environments.

Let us now give the structure of this paper in a few words.
Section \ref{section:GWPI_in_RE} is dedicated to subcritical branching processes in random environments with immigration. In Section \ref{sec:pos_halfline} we consider the model with cookies on the positive integers only. The process with cookies on the negative integers only will be discussed in Section \ref{sec:neg_halfline}. Finally, Theorem \ref{theorem1} is proven in Section \ref{sec:ERWRE} and examples are given for the different cases of this theorem.

\section{Transience and recurrence of subcritical branching processes in a random environment with immigration}
\label{section:GWPI_in_RE}

For information about branching processes in general we refer to \cite{Athreya}.
In our paper we deal with the following Smith-Wilkinson model extended by immigration, see also \cite{Athreya_BPRE1} and \cite[VI.5 and VI.7.1]{Athreya}:
\begin{Definition}
\label{def:GWPI1}
Let $\Lambda$ denote the set of probability generating functions (p.g.f.) that is isomorphic to the set of probability distributions on $\N_0$ and let $\vartheta$ be a probability distribution on $\N_0$. Let $\{\xi_i^{(j)},M_n, i,j,n\in\N \}$ be a family of $\N_0$-valued random variables on a suitable probability space with sample space $\Lambda '$ equipped with a set of measures $\{Q_{\varphi}, \varphi\in\Lambda^{\N}\}$ such that for every fixed \emph{environment} $\varphi=(\varphi_j)_{j\in\N}\in\Lambda^{\N}$ the family $\{\xi_i^{(j)},M_n, i,j,n\in\N \}$ is independent, $(\xi_i^{(j)})_{i\in\N}$ is identically distributed with p.g.f.\ $\varphi_j$ for $j\in\N$ and $(M_n)_{n\in\N}$ is identically $\vartheta$-distributed under $Q_{\varphi}$. We call $(Z_n)_{n\geq 0}$, defined by $Z_0=1$ and
\begin{align*}
 Z_{n} &:= \xi_1^{(n)}+\ldots+\xi_{Z_{n-1}}^{(n)}+M_{n}
\end{align*}
a \emph{branching process with immigration}.\\
Let $\nu$ denote a probability measure on $\Lambda$ and $\varphi$ be randomly chosen according to $\tilde{Q}:=\otimes_{\N}\:\nu$ on $\Lambda^{\N}$ with corresponding expectation operator $\tilde{E}$. Now we define the annealed measure by $Q[\cdot]:=\tilde{E}[Q_{\varphi}[\cdot]]$, by $E_Q$ its expectation operator and call $(Z_n)_{n\geq 0}$ a \emph{BPRE with immigration} under $Q$.
\end{Definition}

In the above definition $\xi_i^{(n)}$ gives the number of progeny of the $i^{th}$ member of generation $n-1$, and $M_n$ the number of immigrants in generation $n$. All members in a generation reproduce according to the same offspring distribution. Due to our further discussion, especially in Section \ref{sec:pos_halfline}, we use the same notation for immigrants as for cookies.
Note that $((\varphi_n, M_n))_{n\in\N}$ is identically distributed and $\{\varphi_n, M_n, n\in\N\}$ is independent under $Q$. Furthermore, we remark that $(Z_n)_{n\geq0}$ is a time-homogeneous Markov chain under $Q$. In order to speak about recurrence and transience of the process it is assumed that the BPRE with immigration is irreducible under $Q$.

Let us set $E_{\varphi}$ for the expectation operator corresponding to $Q_{\varphi}$ and $\mu_n(\varphi):=E_{\varphi}[\xi_1^{(n)}]$ for the expected number of offspring produced by a single member of generation $n-1$. Analogously to the literature, we call $(Z_n)_{n\geq0}$ \emph{subcritical, critical or supercritical}, if ${E}_Q[\log \mu_1]<0, =$ or $>0$ respectively. This is due to the fact that under the first two assumptions, a BPRE without immigration is mortal, whereas under the third assumption this process can explode, see \cite[Theorem 3.1]{Smith69} or \cite[VI.\ 5.5]{Athreya}.
Note the analogy to Theorem RWRE $(i)-(iii)$, see also \cite[Remark 2]{Zerner08}.

Let us now give criteria for recurrence and transience of a subcritical BPRE with immigration in Theorems \ref{theorem:GWPI_in_RE} and \ref{theorem:GWPI_in_RE2}. We remark that these criteria are similar to the ones for random difference equations found in \cite[Theorem 3.1]{Kellerer}.

\begin{theorem}
\label{theorem:GWPI_in_RE}
Let $(Z_n)_{n\geq0}$ be a BPRE with immigration according to Definition \ref{def:GWPI1}.
Suppose that $E_Q[|\log (\mu_1)|]<\infty$, $E_Q[\log (\mu_1)]<0$, $E_Q[\mu_1^{-\theta}]<\infty$ for some $\theta>0$ and $E_Q[\Var_{\varphi}(\xi_1^{(1)})\cdot\mu_1^{-2}]<\infty$.\\
If $\liminf_{t\to\infty}(t\cdot Q[\log{M_1}>t])> E_Q[\log (\mu_1^{-1})]$, then $(Z_n)_{n\geq 0}$ is transient.
\end{theorem}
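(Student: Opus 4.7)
The plan is to show $Q[Z_n\to\infty] > 0$, which suffices for transience of the irreducible Markov chain $(Z_n)$ on $\mathbb{N}_0$: recurrence would force $Z_n = 0$ infinitely often while transience gives $Z_n \to \infty$, so $Q[Z_n\to\infty]\in\{0,1\}$ by the standard dichotomy. Set $\lambda := E_Q[\log \mu_1^{-1}]$ and pick $\lambda < \beta < \alpha < \liminf_{t\to\infty}(t\cdot Q[\log M_1 > t])$. The tail hypothesis gives $Q[\log M_k > \beta k] \geq \alpha/(\beta k)$ for $k$ large. Since $\alpha/\beta > 1$ and the $M_k$ are i.i.d., the second Borel-Cantelli lemma ensures that the \emph{immigration spikes} $A_k := \{M_k \geq e^{\beta k}\}$ occur for infinitely many $k$, $Q$-a.s.

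Using the decomposition $Z_n \geq \max_{1\leq k\leq n} Y_k^{(n)}$, where $Y_k^{(n)}$ counts the descendants at generation $n$ of the $M_k$ immigrants entering at generation $k$, I would next establish a quenched survival estimate. Conditionally on $\varphi$ and $(M_j)_j$, the $(Y_k^{(n)})_k$ are independent subcritical BPREs with $E_\varphi[Y_k^{(n)}] = M_k\prod_{j=k+1}^n\mu_j$ and
\begin{equation*}
\frac{\Var_\varphi(Y_k^{(n)})}{E_\varphi[Y_k^{(n)}]^2} \;=\; \frac{1}{M_k}\sum_{l=k+1}^{n}\frac{\Var_\varphi(\xi_1^{(l)})}{\mu_l^2\prod_{j=k+1}^{l-1}\mu_j}.
\end{equation*}
Fixing $\gamma\in(1,\beta/\lambda)$ and $n\in[k,(1+\gamma)k]$, the strong law for $\sum\log\mu_j$ yields $\log E_\varphi[Y_k^{(n)}] \geq k(\beta-\gamma\lambda)(1-o(1))\to\infty$ on $A_k$, while the variance-to-mean-squared ratio (a geometric-type sum dominated by its last term, of order $M_k^{-1}\prod_{j=k+1}^{n-1}\mu_j^{-1}\sim e^{-k(\beta-\gamma\lambda)}$) tends to $0$ on a set of environments of $Q$-probability close to $1$. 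Paley-Zygmund then bounds $Q_\varphi[Y_k^{(n)}\geq\tfrac12 E_\varphi[Y_k^{(n)}]]$ below by a positive constant on this good set.

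The final step chains survival across time. Partitioning $\mathbb{N}$ into multiplicative windows $[(1+\delta)^i,(1+\delta)^{i+1}]$ with $\delta<\gamma$, each window contains a spike with probability bounded away from $0$, and on such a spike event the quenched estimate extends the ancestral line well into the following window. Conditional independence of the $Y_k^{(n)}$ across distinct $k$'s, combined with a Borel-Cantelli argument across successive windows, then produces a set of positive $Q$-probability on which the spike descendants keep $Z_n$ bounded below by a sequence tending to infinity, completing the proof.

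The expected main obstacle is the quenched variance control. The dominating term $\Var_\varphi(\xi_1^{(n)})\mu_n^{-2}\prod_{j=k+1}^{n-1}\mu_j^{-1}$ in the variance sum depends on the Cram\'er-type fluctuation of $\prod\mu_j^{-1}$ around its exponential growth rate $e^{(n-k)\lambda}$; the assumption $E_Q[\mu_1^{-\theta}]<\infty$ is precisely what powers a large-deviations bound on these fluctuations, while $E_Q[\Var_\varphi(\xi_1^{(1)})\mu_1^{-2}]<\infty$ controls the noise coefficients themselves. The analysis runs in close parallel to Kellerer's treatment of the affine recursion $V_n=\mu_n V_{n-1}+M_n$, to which $(Z_n)$ reduces upon taking quenched conditional means.
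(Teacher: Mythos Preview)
Your chaining step has a genuine gap. The spike events $A_k=\{M_k\geq e^{\beta k}\}$ satisfy $Q[A_k]\sim\alpha/(\beta k)$, so the expected number of spikes in a window $[(1+\delta)^i,(1+\delta)^{i+1}]$ is approximately $(\alpha/\beta)\log(1+\delta)$, a constant independent of $i$. Since distinct windows involve disjoint blocks of the i.i.d.\ sequence $(M_k)$, each window is spike-free with probability bounded below by a positive constant, and by Borel--Cantelli infinitely many windows are spike-free almost surely. Equivalently, the ratio $k_{j+1}/k_j$ of consecutive spike times exceeds any prescribed constant infinitely often a.s. Across such a long gap the single cohort $Y_{k_j}^{(n)}$ has quenched mean $M_{k_j}\prod_{l=k_j+1}^n\mu_l\approx e^{\beta k_j-\lambda(n-k_j)}$, which falls below $1$ once $n>(1+\beta/\lambda)k_j$ and the cohort dies out before the next spike. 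Your lower bound $Z_n\geq\max_k Y_k^{(n)}$ therefore cannot keep $Z_n$ away from $0$ on any event of positive probability; no Borel--Cantelli argument over windows repairs this, because you would need \emph{every} window (eventually) to contain a spike, and that event has probability zero.

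The paper avoids this by comparing $(Z_n)$ to the affine recursion $X_n=(\mu_n/c_2)X_{n-1}+M_n$, which you allude to at the end but do not actually use. The point is that $X_n$ (via its distributional twin $W_n=\sum_{i\le n}(\prod_{j<i}\alpha_j)M_i$) aggregates \emph{all} past immigration, not just the largest cohort. One shows $\sum_n Q[W_n\le n^2]<\infty$ by bounding the probability that \emph{every} $M_{i+1}$ with $n^l\le i<n$ lies below its corresponding threshold: this product is at most $\exp\bigl(-\sum_i c_1/(i\cdot\mathrm{const})\bigr)\lesssim n^{-(1-l)\gamma}$ with $(1-l)\gamma>1$. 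That estimate---not ``some window contains a spike''---is where the hypothesis $\liminf t\cdot Q[\log M_1>t]>\lambda$ enters with the correct constant. The assumption $E_Q[\mu_1^{-\theta}]<\infty$ is used for a large-deviations bound giving the waiting time $T$ (after which $\prod_{j\le n}\mu_j^{-1}\le e^{c_3\lambda n}$) enough moments, and $E_Q[\Var_\varphi(\xi_1^{(1)})\mu_1^{-2}]<\infty$ drives a Chebyshev coupling of $Z_n$ to $X_n$ on $\{X_n>n^2\}$---close in spirit to your Paley--Zygmund step, but applied to the full process rather than to an individual cohort.
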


\begin{proof}
The proof will be divided into two parts. First, we discuss a random difference equation $X_n$ for $n\in\N_0$, and show that $Q[\bigcap_{n\geq 1}\{X_n>n^2\}]>0$.
The second part consists in coupling $(X_n)_{n\geq0}$ and the subcritical BPRE with immigration in order to show that $Q_{\varphi}\left[\lim_{n\to\infty}Z_n=\infty\right]>0$ for $Q$-a.e.\ $\varphi$.

First, let us define some constants. We write 
\[x:=E_Q[\log(\mu_1^{-1})]>0\]
and choose $c_1<\infty$, $c_2>1$ and $c_3>1$ such that
\[\liminf_{t\to\infty}(t\cdot Q[\log{M_1}>t])>c_1>\log c_2 +c_3 x>0.\]
Furthermore, let $\epsilon \in \left(0, c_1-\left(\log{c_2} + c_3 x\right)\right)$ and define
\[\gamma := \frac{c_1}{\epsilon+\log{c_2} + c_3 x}>1.\]
We choose $0<l<1$ such that $(1-l) \gamma >1$.

In order to make use of a convergence property, set
\[T:=\inf\left\{k\in\N: \forall n\geq k: \frac{1}{n}\sum_{m=1}^{n}\log \mu_m \geq - c_3 x\right\}.\]
Then, $T<\infty$ $Q$-a.s.\ by the law of large numbers.
A result about large deviations under the assumption $E_Q[\mu_1^{-\theta}]<\infty$, see \cite[p. 71]{Durrett}, gives
\[Q\left[\sum_{m=1}^{n}\log \mu_m^{-1} > n c_3 x\right]\leq e^{-n\cdot c}\]
for some $c>0$. This implies
\begin{equation}
\label{eq:large_deviations}
 E_Q\left[T^{\frac{1}{l}}\right]=\sum_{n\geq 1}Q[T=n]\cdot n^{\frac{1}{l}}\leq \sum_{n\geq 1}Q\left[\sum_{m=1}^{n-1}\log \mu_m^{-1} > (n-1) c_3 x\right]\cdot n^{\frac{1}{l}}<\infty.
\end{equation}

We begin now with a random difference equation defined by $X_0:= 0$ and
\begin{align*}
 X_{n+1} &:= \alpha_{n+1}X_n + M_{n+1}, 
\end{align*}
with $\alpha_n:={\mu_n}{c_2^{-1}}<\mu_n$. Induction yields
\begin{equation*}
 X_n =\alpha_n\ldots\alpha_2 M_1 + \alpha_n\ldots\alpha_3 M_2 +\ldots + \alpha_n M_{n-1}+ M_n.
\end{equation*}

We follow \cite{Kellerer} in setting
\[W_n:= M_1 + \alpha_1 M_2 +\alpha_1\alpha_2 M_{3}+\ldots + \alpha_1\ldots\alpha_{n-1} M_n\]
for $n\in\N$. Since $(\varphi_1,M_1),\ldots,(\varphi_n, M_n)$ are exchangeable, the law of $X_n$ and the law of $W_n$ are the same,
in particular
\begin{equation}
\label{eq:rde_1}
 Q[X_n\leq n^2]=Q[W_n\leq n^2] \quad\text{for all }n\in\N.
\end{equation}

Our first goal is to show that 
\begin{equation}
\label{eq:rde_2}
 Q\left[\liminf\limits_{n\to\infty}\{X_n>n^2\}\right]=1.
\end{equation}

Therefore, let us start with a discussion of $W_n$:
\begin{align}
 \label{eq:rde_3}
 Q[W_n\leq n^2]&=Q[W_n\leq n^2, T\geq n^l] + Q[W_n\leq n^2, T< n^l]\notag\\
&\leq Q[T\geq n^l] + Q[W_n\leq n^2, T< n^l],
\end{align}

and by definition of $\alpha_i$ and $T$:
\begin{align}
\label{eq:X_nkleiner_n2}
Q[W_n\leq n^2, T< n^l] 
&\leq Q\Bigg[\bigcap_{n^l\leq i < n}\left\{M_{i+1}\leq n^2\left(\alpha_1\ldots\alpha_{i}\right)^{-1}\right\},T< n^l \Bigg]\notag\\
&\leq Q\Bigg[\bigcap\limits_{n^l\leq i < n}\left\{M_{i+1}\leq n^2 (c_2\cdot e^{c_3 x})^{i}\right\},T< n^l \Bigg]\notag\\
&\leq Q\Bigg[\bigcap\limits_{n^l\leq i < n}\left\{M_{i+1}\leq n^2 (c_2\cdot e^{c_3 x})^{i}\right\} \Bigg]\notag\\
&= \prod\limits_{n^l\leq i < n}Q\left[M_{1}\leq n^2 (c_2\cdot e^{c_3 x})^{i} \right]\notag\\
&= \exp{\Bigg(\sum\limits_{n^l\leq i< n}\log\left(1-Q\left[M_{1}> n^2\left(c_2 e^{c_3 x}\right)^i\right]\right)\Bigg)}\notag \\
  &\leq  \exp{\Bigg(-\sum\limits_{n^l\leq i< n} Q\left[M_{1}> n^2\left(c_2 e^{c_3 x}\right)^i\right]\Bigg)}.
\end{align}

According to $\liminf_{t\to\infty}(t\cdot Q[\log{M_1}>t])> c_1$, we get for $n$ large enough
 \begin{align}\label{eq:Trans_Schritt3}
\sum\limits_{n^l\leq i< n}Q\left[M_{1}> n^2\left(c_2 e^{c_3 x}\right)^i\right]
&> \sum\limits_{n^l\leq i< n}{\frac{c_1}{2\log n + i\left(\log{c_2} +c_3 \cdot x\right)}} \notag\\
&\geq  \sum\limits_{n^l\leq i< n}{\frac{c_1}{i\epsilon + i\left(\log{c_2} +c_3 \cdot x\right)}}\notag\\
&= \sum\limits_{1\leq i< n}\gamma\cdot\frac{1}{i}
- \sum\limits_{1\leq i< n^l}\gamma\cdot\frac{1}{i} \notag\\
&\geq \gamma\left(\log(n)-\left(\log\left(n^l\right)+1\right)\right).
\end{align}

For the last inequality we used the convergence of $(\sum_{i=1}^{n}{\frac{1}{i}}-\log n)$ as $n\to\infty$ twice.
Thus, combining (\ref{eq:X_nkleiner_n2}) with (\ref{eq:Trans_Schritt3}) yields for $n$ large enough
\begin{align*}
Q[W_n\leq n^2, T< n^l] &\leq 
e^{-\gamma\cdot\log\left(n^{1-l}\right)}
\cdot e^{\gamma}
\leq n^{(l-1)\cdot\gamma}\cdot e^{\gamma}
\end{align*}

And finally we get by (\ref{eq:rde_1}) and (\ref{eq:rde_3})
\begin{align*}
\sum\limits_{n\geq1}Q\left[X_n\leq n^2\right]
\leq \sum\limits_{n\geq1}(Q[T^{\frac{1}{l}}\geq n]+Q[W_n\leq n^2, T< n^l])
<\infty 
\end{align*}
since $\sum_{n\geq1}n^{(l-1)\cdot\gamma}<\infty$ and $\sum_{n\geq1}Q[T^{\frac{1}{l}}\geq n]\leq E_Q[T^{\frac{1}{l}}]<\infty$, see (\ref{eq:large_deviations}).

Thus, by the Borel-Cantelli Lemma, we obtain (\ref{eq:rde_2})
and one can check that
\begin{equation}
\label{eq:immer_Xn>n2}
Q_{\varphi}\left[\bigcap_{n\geq 1}\{X_n>n^2\}\right]>0 \quad\text{for $Q$-a.e.\ }\varphi.
\end{equation}
Therefore, note that for $Q$-a.e.\ $\varphi$ there exist $\tilde{x}_0\geq x_0>n_0^2$ with 
\begin{equation*}
Q_{\varphi}\bigg[\{X_{n_0}=x_0\}\cap\bigcap\limits_{n\geq {n_0+1}}\{X_n>n^2\}\bigg]>0 \text{ and } Q_{\varphi}\bigg[\{X_{n_0}= \tilde{x}_0\}\cap\bigcap\limits_{j=1}^{n_0-1}\{X_j>j^2\}\bigg]>0.
\end{equation*}
Let us define recursively two processes
\begin{align*}
Y_0^{(x_0)}&:= x_0, &Y_n^{(x_0)}&:= \alpha_{n_0+n}\cdot Y_{n-1}^{(x_0)}+M_{n_0+n}\\
Y_0^{(\tilde{x}_0)}&:=\tilde{x}_0, &Y_n^{(\tilde{x}_0)}&:= \alpha_{n_0+n}\cdot Y_{n-1}^{(\tilde{x}_0)}+M_{n_0+n} \quad \text{for }n\in\N.
\end{align*}
Since $\tilde{x}_0\geq x_0$ we have that $Y_n^{(\tilde{x}_0)}\geq Y_{n}^{(x_0)}$. Thus, (\ref{eq:immer_Xn>n2}) follows by
\begin{align*}
Q_{\varphi}&\bigg[\bigcap\limits_{n\geq 1}\{X_n>n^2\}\bigg]\geq Q_{\varphi}\bigg[\{X_{n_0}= \tilde{x}_0\}\cap\bigcap\limits_{j=1}^{n_0-1}\{X_j>j^2\}\cap\bigcap\limits_{n\geq 1}\{Y_n^{(\tilde{x}_0)}>(n_0+n)^2\bigg]\\
&\geq Q_{\varphi}\bigg[\{X_{n_0}= \tilde{x}_0\}\cap\bigcap\limits_{j=1}^{n_0-1}\{X_j>j^2\}\bigg]\cdot Q_{\varphi}\bigg[\bigcap\limits_{n\geq 1}\{Y_n^{(x_0)}>(n_0+n)^2\bigg]\\
&>0.
\end{align*}

Our next objective is to couple $(X_n)_{n\geq 0}$ and BPRE with immigration. Therefore,
recall Definition \ref{def:GWPI1} for the branching process $(Z_n)_{n\geq0}$.
If we couple these processes, the increments of the difference equation correspond to the immigrants in the BPRE and the multiplication with $\alpha_n$ to the expected number of descents of one individual in generation $n-1$, multiplied by $c_2^{-1}$.
Our goal is to show that $Q_{\varphi}\left[\bigcap_{n\geq 1}\{Z_n\geq X_n > n^2\}\right]>0$ for $Q$-a.e.\ $\varphi$.

We define for $n\in\N_0$
\[B_n:=\bigcap_{j=1}^{n}\{Z_j\geq X_j\}\cap\bigcap_{k\geq 1}\{X_k >k^2\}.\]
Then, we get for $Q$-a.e.\ $\varphi$,
\begin{align*}
Q_{\varphi}&\left[Z_n < X_n, B_{n-1}\right]
=\sum\limits_{k> (n-1)^2}Q_{\varphi}\left[ Z_n < X_n,\;Z_{n-1}=k, B_{n-1}\right] \notag\\
&\leq\sum\limits_{k> (n-1)^2}Q_{\varphi}\left[ \mu_n\cdot k-\sum\limits_{i=1}^{k}{\xi_i^{(n)}}> \left(\mu_n-\alpha_n\right)\cdot k,\;Z_{n-1}=k, B_{n-1}\right] \notag\\
&=\sum\limits_{k> (n-1)^2}Q_{\varphi}\left[ \mu_n\cdot k-\sum\limits_{i=1}^{k}{\xi_i^{(n)}}> \left(1-c_2^{-1}\right)\mu_n\cdot k\right]\cdot Q_{\varphi}\left[Z_{n-1}=k, B_{n-1}\right].
\end{align*}

Now, Chebyshev inequality 
implies 
\begin{align*}
Q_{\varphi}\left[ \mu_n\cdot k-\sum\limits_{i=1}^{k}{\xi_i^{(n)}}> \left(1-c_2^{-1}\right)\mu_n\cdot k\right]
&\leq\frac{\Var_{\varphi}\left(\sum\limits_{i=1}^{k}{\xi_i^{(n)}}\right)}{\left(1-c_2^{-1}\right)^2\cdot \mu_n({\varphi})^2\cdot k^2}\\
&\leq  \frac{\Var_{\varphi}\left({\xi_1^{(n)}}\right)}{\left(1-c_2^{-1}\right)^2\cdot \mu_n({\varphi})^2\cdot k}\; .
\end{align*}

Thus, we have
\begin{align*}
 Q_{\varphi}\left[Z_n < X_n, B_{n-1}\right] < \frac{\Var_{\varphi}\left({\xi_1^{(n)}}\right)}{\left(1-c_2^{-1}\right)^2\cdot \mu_n({\varphi})^2\cdot (n-1)^2}\cdot Q_{\varphi}\left[B_{n-1}\right]
\end{align*}
and therefore
\begin{align}
\label{eq:sum_Qn}
 \sum_{n\in\N}&Q_{\varphi}\left[Z_n < X_n | B_{n-1}\right]<\infty \quad Q\text{-a.s.}
\end{align}
according to assumption $E_Q[\Var_{\varphi}(\xi_1^{(1)})\cdot\mu_1^{-2}]<\infty$ and \cite[Theorem 4.2.1.]{Lukacs75}.
It is easy to see that for all $n\geq 1$ and $Q$-a.e.\ $\varphi$ $\quad Q_{\varphi}\left[Z_n < X_n | B_{n-1}\right]<1 .$
Now we conclude from (\ref{eq:immer_Xn>n2}) and (\ref{eq:sum_Qn}) that
\begin{equation*}
Q_{\varphi}\left[\bigcap\limits_{n\in \N}\{Z_n\geq X_n\}\Bigg| \bigcap\limits_{k\in\N}\{X_k> k^2\}\right]
=\prod\limits_{n\in \N}{Q_{\varphi}\left[Z_n \geq X_n | B_{n-1} \right]}
> 0 \; .
\end{equation*}

Combining this 
with (\ref{eq:immer_Xn>n2}) we can assert that for $Q$-a.e.\ $\varphi$
\begin{align*}
Q_{\varphi}\left[\lim\limits_{n\to\infty}Z_n=
\infty\right]&\geq 
Q_{\varphi}\left[\bigcap\limits_{n\in\N}\{Z_n\geq X_n\}
\Bigg| \bigcap\limits_{k\in\N}\{X_k> k^2\}
\right] \cdot Q_{\varphi}\left[\bigcap\limits_{k\in\N}\{X_k> k^2\}
\right]
>0.
\end{align*}

Therefore, \[Q\left[\lim\limits_{n\to\infty}Z_n=\infty\right]=E_Q\left[Q_{\varphi}\left[\lim\limits_{n\to\infty}Z_n=\infty\right]\right]>0\]
which completes the proof.
\end{proof}

The following theorem gives a criterion for recurrence.

\begin{theorem}
\label{theorem:GWPI_in_RE2}
Let $(Z_n)_{n\geq0}$ be a BPRE with immigration according to Definition \ref{def:GWPI1} with $Q[M_n=0]>0$ and $\varphi_1(0)>0$ $Q$-a.s.
Suppose that $E_Q[|\log (\mu_1^{-1})|]<\infty$ and $E_Q[\log (\mu_1^{-1})]>0$.\\
If $\limsup_{t\to\infty}(t\cdot Q[\log{M_1}>t])< E_Q[\log (\mu_1^{-1})]$, then $(Z_n)_{n\geq 0}$ is recurrent.
\end{theorem}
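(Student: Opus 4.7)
The plan is to mirror the structure of the proof of Theorem~\ref{theorem:GWPI_in_RE}, with the direction of the inequalities reversed, aiming to show that $Z_n = 0$ infinitely often $Q$-a.s. First, I would reduce the claim to a boundedness-infinitely-often statement: the assumptions $Q[M_1 = 0] > 0$ and $\varphi_1(0) > 0$ $Q$-a.s.\ yield
\[
Q\bigl[Z_{n+1} = 0 \bigm| Z_n = k\bigr] \;\geq\; Q[M_1 = 0] \cdot \tilde{E}\bigl[\varphi_1(0)^K\bigr] \;>\; 0
\quad \text{for all } k \leq K,
\]
so, by a conditional Borel-Cantelli argument applied along the random times at which $Z_n \leq K$, recurrence of $(Z_n)$ reduces to the existence of some $K < \infty$ for which $\{n : Z_n \leq K\}$ is $Q$-a.s.\ infinite. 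Equivalently, I must rule out $Z_n \to \infty$.

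Next I would introduce an upper-bound random difference equation. Pick $c_2 > 1$ close enough to $1$ that
\[
\log c_2 + \limsup_{t\to\infty} t \cdot Q[\log M_1 > t] \;<\; E_Q[\log \mu_1^{-1}],
\]
which is possible by the theorem's hypothesis, and set $\beta_n := c_2 \mu_n$, $Y_n := \beta_n Y_{n-1} + M_n$ with $Y_0 := 1$. An upper-bound coupling analogous to the lower-bound coupling $Z_n \geq X_n$ in the proof of Theorem~\ref{theorem:GWPI_in_RE}---a Chebyshev or truncation-based concentration estimate on $\sum_{i=1}^{Z_{n-1}} \xi_i^{(n)}$---would yield that on an event of positive $Q_\varphi$-probability, $Z_n \leq Y_n$ for all sufficiently large $n$ (at least whenever both processes exceed a common threshold). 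Since $(Z_n)$ is a time-homogeneous Markov chain on $\N_0$ under $Q$, once $Q[\{Z_n \not\to \infty\}] > 0$ is established, the usual recurrence/transience dichotomy upgrades this to probability $1$.

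The heart of the argument is then showing that $(Y_n)$ visits a bounded set infinitely often $Q$-a.s. By exchangeability, $Y_n$ has the same law as the monotone sum $V_n := \beta_1 \cdots \beta_n + \sum_{k=1}^n \beta_1 \cdots \beta_{k-1} M_k$, and the heavy-tail hypothesis implies $V_\infty = \infty$ $Q$-a.s.\ (via a conditional Borel-Cantelli argument in the other direction, since $\sum_k Q[\beta_1 \cdots \beta_{k-1} M_k > 1 \mid (\beta_j)_j]$ diverges a.s.), so the Markov chain $(Y_n)$ is merely null-recurrent: $Y_n \to \infty$ in distribution, yet returns to $[0, K_0]$ along an a.s.\ infinite subsequence. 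I would prove this null-recurrence via a regenerative excursion analysis: between the rare ``spikes'' of $M_n$, the log-process $\log Y_n$ descends at linear rate by the negative drift $\log c_2 - E_Q[\log \mu_1^{-1}] < 0$, while the hypothesis controls the frequency of spikes large enough to interrupt this descent through a harmonic-type computation mirroring (with reversed inequalities) the one in \eqref{eq:Trans_Schritt3}. This step is the analog of Kellerer's criterion \cite[Theorem 3.1]{Kellerer} for random difference equations and is the main obstacle.

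Combining the coupling with the null-recurrence of $(Y_n)$ then yields that $\{n : Z_n \leq K\}$ is infinite $Q$-a.s.\ for some $K$, and the initial reduction concludes $Z_n = 0$ infinitely often $Q$-a.s.
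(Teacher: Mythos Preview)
Your approach is genuinely different from the paper's, and it has a real gap: the upper-bound coupling $Z_n \leq Y_n$ via a Chebyshev-type estimate requires control on $\Var_\varphi(\xi_1^{(n)})\mu_n^{-2}$, which is \emph{not} assumed in Theorem~\ref{theorem:GWPI_in_RE2}. The variance hypothesis $E_Q[\Var_\varphi(\xi_1^{(1)})\mu_1^{-2}]<\infty$ appears only in Theorem~\ref{theorem:GWPI_in_RE}; the recurrence theorem deliberately drops it. Without any second-moment information on the offspring, you cannot control the probability that $\sum_{i=1}^{k}\xi_i^{(n)}$ exceeds $c_2\mu_n k$, so the coupling step fails as stated. (Even with the variance bound, the coupling is delicate: the Chebyshev error $\sim k^{-1}$ is only small when $Z_{n-1}=k$ is large, so you would need to argue along the lines ``if $Z_n\to\infty$ then eventually the coupling holds and then $Y_n$ drags $Z_n$ back down'', which is more subtle than the one-sided argument in the transience proof.)

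By contrast, the paper avoids coupling entirely. It shows $\sum_n Q[Z_n=0]=\infty$ directly by decomposing $Z_n$ into independent subcritical BPREs seeded by the successive immigration cohorts, then reversing the environment via exchangeability so that the relevant extinction probabilities become $\varphi_j(\varphi_{j-1}(\cdots\varphi_1(0)\cdots))$. The convexity bound $\varphi_n(\cdots\varphi_1(0))\geq 1-\mu_1\cdots\mu_n$ replaces any moment estimate on the offspring, and a Raabe-type test on the resulting product yields divergence. This is why the theorem needs only the log-moment condition on $\mu_1$ and nothing on the variance. If you want your route to go through under the stated hypotheses, you would need to find a moment-free way to dominate $Z_n$ by a random difference equation; the natural candidate is the conditional-mean bound $E_\varphi[Z_n\mid Z_{n-1}]=\mu_n Z_{n-1}+E[M_1]$, but that controls expectations, not trajectories.
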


\begin{proof}
We will show that $\sum_{n\geq0}Q[Z_n=0]=\infty$, which is equivalent to the recurrence of the process since $(Z_n)_{n\in\N_0}$ is a time-homogeneous Markov chain under $Q$. For the proof we use a second well-known definition for a BPRE with immigration, illustrated in Figure \ref{fig:BPRE}, and employ the exchangeability of $((\varphi_k,M_k))_{1\leq k\leq n}$ under $Q$ for $n\in\N$.

Equivalently to Definition \ref{def:GWPI1} the BPRE with immigration can also be defined by
\[Z_n =\sum\limits_{j=0}^{n}{Z_{n-j}(j)}, \quad n\in\mathbb{N}_0,\]
where for $j\in\N_0$, $(Z_n(j))_{n\in\N_0}$ is an ordinary BPRE characterized by offspring p.g.f.\ $(\varphi_{n+j})_{n\in\N}$ with $Z_0(0)=1$ and $Z_0(j)=M_j$ for $j\geq 1$. Furthermore, the BPREs have to be independent under conditioning on $\varphi$, see \cite[p.250]{Athreya}. For an explanation see Figure \ref{fig:BPRE}.

\begin{figure}[!ht]%
\centering

\scalebox{0.65}{\includegraphics[width=\columnwidth]{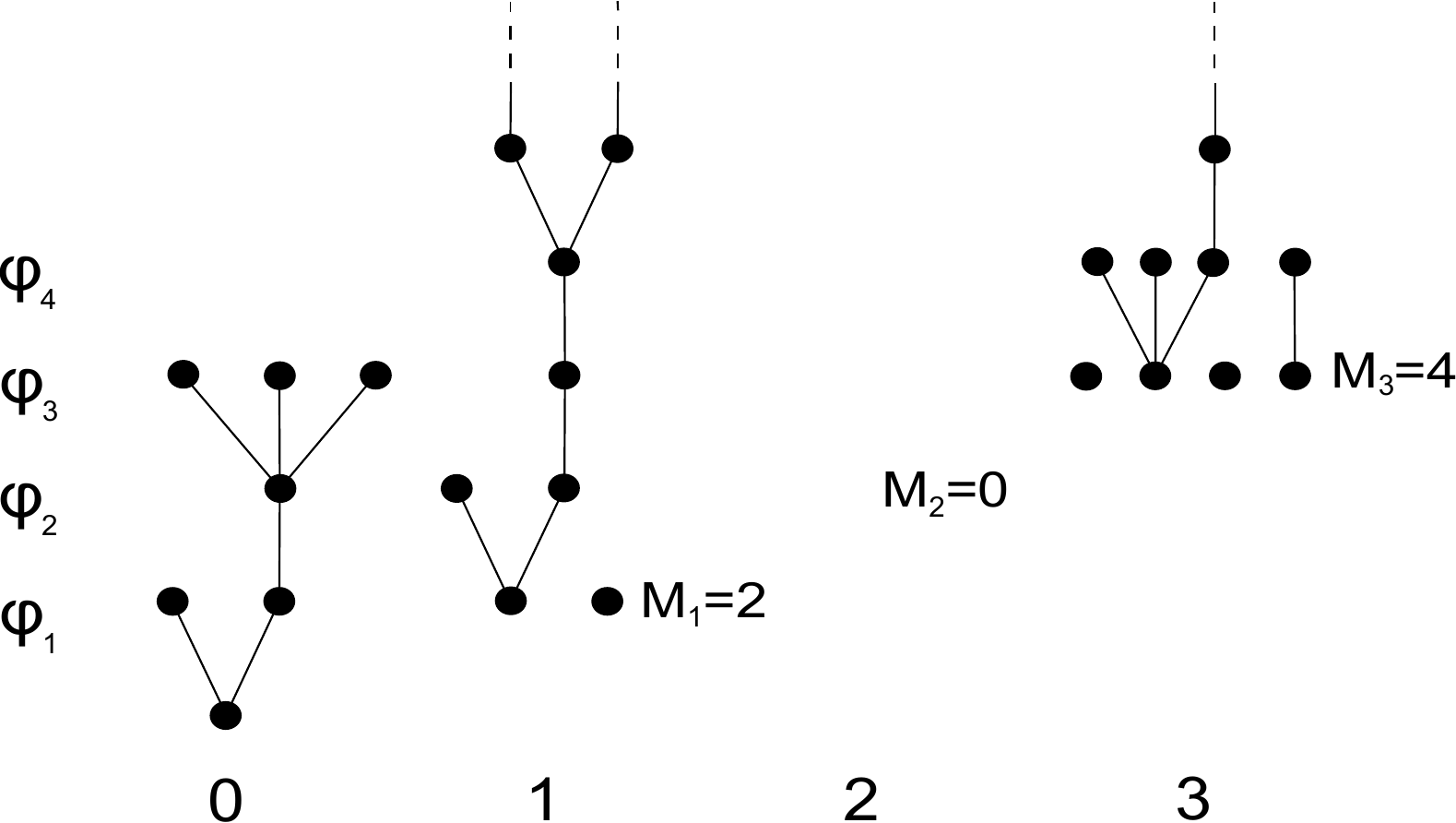}}
\caption{BPRE with immigration:}
{\parbox{0.85\textwidth}{\small The figure shows a realization of the first generations of the BPREs $(Z_n(j))_{n\in\N_0}$, $j\in\{0,\ldots,3\}$. The p.g.f.\ $\varphi_i$ for reproduction is written on the left side. Summation over the $n^{th}$ row gives $Z_n$.}}
\label{fig:BPRE}
\end{figure}

For the moment let us fix $n\in\N$. We exchange $(\varphi_1,M_1)$ by $(\varphi_n,M_n)$, $(\varphi_2,M_2)$ by $(\varphi_{n-1},M_{n-1})$, $\ldots$ $(\varphi_n,M_n)$ by $(\varphi_1,M_1)$
and define $Z_0'(0)=1$ and
\[Z_n' =\sum\limits_{j=0}^{n}{Z_{n-j}'(j)},\]
where $Z_{n-j}'(j)$ uses the exchanged random vectors $((\varphi_i,M_i))_{1\leq i\leq n}$, i.e.\ the distribution of $Z_{n-j}'(j)$ is given by $\varphi_{n-j}(\varphi_{n-j-1}(\ldots\varphi_{1}(s)\ldots))^{M_{n-j+1}}$ --- whereas the distribution of $Z_{n-j}(j)$ is given by $\varphi_{j+1}(\varphi_{j+2}(\ldots\varphi_{n}(s)\ldots))^{M_{j}}$ --- and $Z_0'(j)=M_{n-j+1}$.
Exchangeability of $((\varphi_k,M_k))_{1\leq k\leq n}$ thus implies
\[Q[Z_n=0]=Q[Z_n'=0] \qquad\text{ for all } n\in\N.\]

The task is now to show that 
\[\sum_{n\geq0}Q[Z_n'=0]=\infty.\]
Therefore let us examine $Q_{\varphi}[Z_n'=0]$. The strategy is the following:\\
In the first generations a high value of immigration is allowed --- knowing that the size of the population becomes small, when the time passes by, since we consider a subcritical branching process --- whereas later, when we come closer to the generation to be examined, there are only few people permitted to immigrate.

Choose $c>0$ such that $\limsup_{t\to\infty}(t\cdot Q[\log M_1>t])<c<E_Q[\log (\mu_1^{-1})]$.
Thus, we obtain for large $t\in\R$,
\begin{equation}
   Q[M_1>t]<\frac{c}{\log t}<1.
  \label{eq:vb1}
  \end{equation}

We consider some $\gamma>1$ such that $\gamma\cdot c<E_Q[\log (\mu_1^{-1})]$ and define for $k\in\mathbb{N}$
\begin{equation*}
c_k:= e^{\gamma c k}>1.
\label{eq:c_n}
\end{equation*}

Note that the lines of descent are independent under conditioning on $\varphi$ 
and that
\begin{align*}
 Q_{\varphi} \left[Z_{j}'(n-j)=0, Z_0'(n-j)\leq c_{j}\right] 
\geq \varphi_{j}(\varphi_{j-1}(\ldots\varphi_{1}(0)\ldots))^{c_{j}}\cdot Q_{\varphi} \left[Z_0'(n-j)\leq c_{j}\right]
\end{align*}
for $1\leq j\leq n$.
Hence, we get for every $n\in\N$,
\begin{align}
\label{eq:def_a_n}
Q_{\varphi}\left[Z_n'=0\right]
&\geq Q_{\varphi}\left[\sum\limits_{j=0}^{n}{Z_{n-j}'(j)=0},\; \bigcap\limits_{j=0}^{n-1}\{Z_0'(j)\leq c_{n-j}\}\right]\notag\\
&= Q_{\varphi}[Z_0'(n)=0]\cdot \prod\limits_{j=0}^{n-1}{Q_{\varphi} \left[Z_{n-j}'(j)=0, Z_0'(j)\leq c_{n-j}\right]}\notag\\
&\geq Q[M_1=0]\cdot \prod\limits_{j=1}^{n}\varphi_{j}(\varphi_{j-1}(\ldots\varphi_{1}(0)\ldots))^{c_{j}}\cdot \prod\limits_{j=1}^{n-1} Q \left[M_{j+1}\leq c_{j}\right]\notag\\
&=:a_n.
\end{align}

Since $a_n>0$ $Q$-a.s., we are able to consider for $n\geq 2$
\begin{align}
\label{eq:frac_a_n}
 \frac{a_n}{a_{n-1}}
&= \varphi_{n}(\varphi_{n-1}(\ldots\varphi_{1}(0)\ldots))^{c_{n}} \cdot Q_{\varphi} \left[M_{n}\leq c_{n-1}\right].
\end{align}

In the case of subcritical BPREs, convexity implies, see e.g. \cite[p.1853]{Athreya_BPRE2},
\[\varphi_{n}(\varphi_{n-1}(\ldots \varphi_{1}(0)))\geq 1-\mu_{1}\cdot\ldots\cdot \mu_{n}\;.\]

Choose $\epsilon >0$ such that $\gamma\cdot c+ E_Q[\log \mu_1]+\epsilon <0$ and let 
\[T:=\inf\{k\in\N: \forall n\geq k: \frac{1}{n}\sum_{1\leq m\leq n}\log \mu_m \leq E_Q[\log \mu_1]+\epsilon\}.\]

By the law of large numbers, $T<\infty$ $Q$-a.s.
For every $N\in\N$ we obtain on $\{T\leq N\}$ for $n\geq N$,
\begin{align}
\label{eq:phi_n__phi_0}
 \varphi_{n}(\varphi_{n-1}(\ldots\varphi_{1}(0)\ldots))^{c_{n}}
&\geq (1-\mu_n \cdot\ldots\cdot \mu_1)^{c_n}\notag\\
&\geq (1-e^{ n (E_Q[\log \mu_1]+\epsilon)})^{c_n}\notag\\
&\geq 1-e^{n(\gamma\cdot c+E_Q[\log \mu_1]+\epsilon)}.
\end{align}

Applying (\ref{eq:vb1}) yields
\begin{equation}
\label{eq:M_n<=c_n}
 Q[M_n\leq c_{n-1}]\geq 1-\frac{1}{\gamma (n-1)} \qquad\text{for $n$ large enough}.
\end{equation}

Thus, we get from (\ref{eq:frac_a_n}), (\ref{eq:phi_n__phi_0}) and (\ref{eq:M_n<=c_n}) $\frac{a_n}{a_{n-1}}\geq 1-\frac{1}{n-1}$ for $n$ large enough
and finally, by a criterion of Raabe and (\ref{eq:def_a_n}),
\[\sum_{n\geq0}Q_{\varphi}[Z_n'=0]=\infty\qquad \text{on }\{T\leq N\}\]
for any $N\in\N$ and hence $Q$-a.s. Therefore, we can conclude
\[\sum_{n\geq0}Q[Z_n=0]=\sum_{n\geq0}Q[Z_n'=0]=\sum_{n\geq0}E_Q[Q_{\varphi}[Z_n'=0]]=\infty.\]
\end{proof}

\section{Random walk in a random environment with cookies on the positive integers}
\label{sec:pos_halfline}
We now return to the random walk $(S_n)_{n\geq0}$.
This section deals with the case where cookies are only allowed on the positive integers. For its discussion a well-known connection between branching processes with migration and excited random walks is used. This idea was also employed in \cite{Basdevant08b, Basdevant08a, Zerner08} for a simple symmetric random walk disturbed by cookies. For detailed explanations we refer to \cite{Zerner08}.

In our case, an analogous relation between a subcritical BPRE with immigration and the ERWRE is obtained.
The purpose of this connection is to prove a recurrence and transience criterion for $(S_n)_{n\geq0}$.

\begin{proposition}
\label{theorem:ERWRE_right}
Let Assumption \ref{as_A} 
hold.
Assume $\mathbb{P}[M_z=0]=1$ for all $z\in -\N_0$, $(p_n, M_n)_{n\in\N}$ i.i.d.\ and $\mathbb{E}[p_0^{-1}]<\infty$.
           \begin{itemize}
                 \item[(i)] If $\liminf_{t\to\infty}(t\cdot\mathbb{P}[\log{M_1}>t])>\mathbb{E}[\log \rho_0]$,
                 then $P_0[\lim_{n\to\infty}S_n= +\infty]>0$.
                 \item[(ii)] If $\limsup_{t\to\infty}(t\cdot\mathbb{P}[\log{M_1}>t])<\mathbb{E}[\log \rho_0]$,
                 then $P_0[\lim_{n\to\infty}S_n= +\infty]=0$.
           \end{itemize}
\end{proposition}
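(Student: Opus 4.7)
The plan is to translate the problem into one about a BPRE with immigration (in the sense of Definition~\ref{def:GWPI1}) and then apply Theorems~\ref{theorem:GWPI_in_RE} and \ref{theorem:GWPI_in_RE2}. Following the Kesten--Kozlov--Spitzer scheme as adapted for cookie walks in \cite{Zerner08}, for each $k\ge 0$ let $U_k$ and $D_k$ denote the total number of up- and down-jumps that the walker makes from site $k$. For a walker transient to $+\infty$ the net-crossing identity $U_k=D_{k+1}+1$ holds. At each site $k\ge 1$ the walker's visits consist of $M_k$ cookie-forced up-jumps followed by an i.i.d.\ Bernoulli$(p_k)$ stream; partitioning this stream by its failures produces runs of successes whose lengths are, conditionally on the environment, i.i.d.\ Geom$(q_k)$-distributed on $\{0,1,\ldots\}$ with mean $1/\rho_k$. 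Unwinding these runs yields the identity $U_k = M_k + \sum_{i=1}^{D_k+1}\tilde g_i^{(k)}$; setting $Z_k:=U_k$ and using $D_k+1=Z_{k-1}$, one arrives at the forward recursion
\[
Z_k \;=\; M_k + \sum_{i=1}^{Z_{k-1}}\tilde g_i^{(k)},\qquad k\ge 1,
\]
which is precisely the recursion of a BPRE with immigration from Definition~\ref{def:GWPI1}, with offspring p.g.f.\ $\varphi_k(s)=q_k/(1-p_k s)$, offspring mean $\mu_k = 1/\rho_k$, and immigrants $M_k$.

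Next, I check the hypotheses of Theorems~\ref{theorem:GWPI_in_RE} and \ref{theorem:GWPI_in_RE2}. Since $\mu_1 = 1/\rho_0$, Assumption~\ref{as_A3} gives $E_Q[|\log\mu_1|] = \mathbb{E}[|\log\rho_0|] < \infty$, $E_Q[\log\mu_1] < 0$, and the crucial identification $E_Q[\log\mu_1^{-1}] = \mathbb{E}[\log\rho_0]$. Taking $\theta = 1$, $E_Q[\mu_1^{-\theta}] = \mathbb{E}[\rho_0] = \mathbb{E}[p_0^{-1}] - 1 < \infty$ by hypothesis. The Geom$(q_0)$-variance equals $p_0/q_0^2$ and $\mu_1^{-2} = (q_0/p_0)^2$, so $E_Q[\Var_\varphi(\xi_1^{(1)})\mu_1^{-2}] = \mathbb{E}[p_0^{-1}] < \infty$. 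The positivity conditions $\varphi_1(0) = q_0 > 0$ a.s.\ and $Q[M_1=0] = \mathbb{P}[M_1=0] > 0$ follow from Assumptions~\ref{as_A3} and \ref{as_A4}. Under the identification $E_Q[\log\mu_1^{-1}] = \mathbb{E}[\log\rho_0]$, the tail hypothesis in part (i) (resp.\ (ii)) of the proposition becomes exactly that of Theorem~\ref{theorem:GWPI_in_RE} (resp.\ \ref{theorem:GWPI_in_RE2}), yielding transience (resp.\ recurrence) of the BPRE $(Z_k)$.

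It then remains to translate these conclusions into statements about the walker. The event $\{\lim_n S_n = +\infty\}$ coincides $P_0$-a.s.\ with the event that the walker crosses every edge $(k,k+1)$ upward, i.e.\ $U_k\ge 1$ (equivalently $Z_k\ge 1$) for all sufficiently large $k$; conversely, on $\{Z_k=0 \text{ for infinitely many } k\}$ the walker fails to cross from $k$ to $k+1$ for these $k$'s, so it stays bounded above and cannot tend to $+\infty$. In case (i), transience of $(Z_k)$ gives $Q[Z_k\to\infty]>0$, whence $P_0[\lim_n S_n=+\infty]>0$. In case (ii), recurrence (via $\sum_k Q[Z_k=0]=\infty$ shown in the proof of Theorem~\ref{theorem:GWPI_in_RE2}, combined with irreducibility) forces $\{Z_k=0\text{ i.o.}\}$ to hold $Q$-a.s., giving $P_0[\lim_n S_n=+\infty]=0$.

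The main technical obstacle, I anticipate, is the identity established in the first step: the walker's terminating visit to site $k$ stops the Bernoulli stream in the \emph{middle} of a run of successes, so the last summand $\tilde g_{D_k+1}^{(k)}$ appears a priori to be length-biased rather than Geom$(q_k)$. A careful exchangeability argument -- analogous to the one of \cite{Zerner08} for cookie walks on the symmetric base, and exploiting the memoryless property of the geometric distribution together with the independence of the walker's return mechanism from the unused Bernoulli trials at $k$ -- is what is needed to verify that the $\tilde g_i^{(k)}$ are indeed i.i.d.\ Geom$(q_k)$, and this is where the bulk of the care in the coupling will be concentrated.
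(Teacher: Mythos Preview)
Your overall strategy and the verification of the moment hypotheses of Theorems~\ref{theorem:GWPI_in_RE} and~\ref{theorem:GWPI_in_RE2} are correct and coincide with the paper's. The gap is in the coupling between the walker and the branching process.

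You set $Z_k:=U_k$ and derive the BPRE recursion from the net-crossing identity $D_k+1=U_{k-1}$, but that identity holds only on $\{S_n\to+\infty\}$; on the event that the excursion returns to $0$ one has $D_k=U_{k-1}$ instead, so using it to \emph{establish} transience is circular. More seriously, even on the transient event your identity $U_k=M_k+\sum_{i=1}^{D_k+1}\tilde g_i^{(k)}$ can fail outright: if the walker visits $k$ fewer than $M_k$ times (it eats one cookie, moves right, and never returns) then $U_k<M_k$, yet your right-hand side is $\ge M_k$. Thus $(U_k)$ simply does not obey the recursion of Definition~\ref{def:GWPI1}, and the ``length-biasing'' obstacle you raise in your last paragraph is another symptom of the same defect. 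The remedy is not an exchangeability or memorylessness argument but rather to \emph{decouple} the branching process from the walker's trajectory. The paper defines $V_k$ directly from the infinite Bernoulli streams $(X_i^{(k)})_{i>M_k}$, taking $\xi_j^{(k)}$ to be the number of successes between the $(j-1)^{\mathrm{th}}$ and $j^{\mathrm{th}}$ failure in that stream; these are then i.i.d.\ $geo_{\N_0}(1-p_k)$ by construction, with no truncation issue. One then proves the comparison (Lemma~\ref{lem:Relation-U-V}): on $\{T_0<\infty\}$ one has $U_k=V_k\cdot\mathbbmss{1}_{\{V_j>0\ \forall j<k\}}$, while on $\{T_0=\infty\}$ one has only $U_k\le V_k$. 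From this it follows that $P_1[T_0=\infty]=P_1[V_k>0\ \forall k]$ (Lemma~\ref{lem:ZshRekurrenzERWundGWPI}), which is the clean equivalence between recurrence/transience of $(V_k)$ and recurrence-from-the-right of the walk that, via Proposition~\ref{theorem:combinationGWPI_and_ERW}, makes the application of Theorems~\ref{theorem:GWPI_in_RE} and~\ref{theorem:GWPI_in_RE2} rigorous.
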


Let us first derive the connection to BPRE with immigration and state some useful results.
We denote the hitting time of $k\in\Z$ by
\[T_k:=\inf\{n\in\N: S_n=k\}.\]

\begin{lemma}
\label{lem:Sn+-unendlich}
Let Assumption \ref{as_A} 
hold. Assume
$\mathbb{P}[M_z=0]=1$ for all $z\in -\N_0$ and $(p_n, M_n)_{n\in\N}$ i.i.d.
Then $P_0[\lim_{n\to\infty}S_n \in\{\pm\infty\}]=1$.
\end{lemma}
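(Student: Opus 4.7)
The plan is to prove (i) that the origin is visited only finitely often under $P_0$, (ii) propagate this to every $k\in\mathbb{N}_0$, and (iii) rule out oscillation by the nearest-neighbour character of the walk.

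Step (i) is the heart of the argument. Since there are no cookies on $-\mathbb{N}_0$, the walk restricted to $(-\infty,0]$ is a bona fide RWRE while it stays there, so a standard gambler's-ruin computation yields
\[V_1(\omega):=P_{-1,\omega}[T_0<\infty]=1-\left(1+\sum_{n\geq 1}\prod_{i=1}^{n}\rho_{-i}^{-1}\right)^{-1}.\]
Under \ref{as_A3} one has $\mathbb{E}[\log\rho_0^{-1}]<0$, so by the strong law of large numbers the series converges $\mathbb{P}$-a.s., whence $V_1(\omega)\in(0,1)$ for $\mathbb{P}$-a.e.\ $\omega$. I would then argue on the event $\{S_n=0\text{ i.o.}\}$: since $M_0<\infty$ $\mathbb{P}$-a.s.\ by \ref{as_A4}, all cookies at $0$ are consumed in finite time, after which every visit to $0$ produces a step to $-1$ with probability $q_0>0$ independently given $\omega$. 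The conditional Borel--Cantelli lemma then produces infinitely many $0\to -1$ transitions, each of which initiates a fresh excursion in $(-\infty,-1]$; by the strong Markov property these excursions are i.i.d.\ under $P_{0,\omega}$, each returning to $0$ with quenched probability $V_1(\omega)<1$. A second application of Borel--Cantelli to the events ``the $j$-th excursion fails to return to $0$'' (each of positive quenched probability $1-V_1(\omega)$) shows that $P_{0,\omega}$-a.s.\ at least one excursion fails to return, contradicting $\{S_n=0\text{ i.o.}\}$. Hence $P_0[S_n=0\text{ i.o.}]=0$.

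For Step (ii), the same cookies-consumption plus Borel--Cantelli reasoning applied at a level $k\geq 1$ gives $\{S_n=k\text{ i.o.}\}\subseteq\{S_n=k-1\text{ i.o.}\}$ up to a $P_0$-null set, so by induction the event $\{S_n=k\text{ i.o.}\}$ is contained in $\{S_n=0\text{ i.o.}\}$, which has probability zero by Step (i). Thus $P_0$-a.s.\ every $k\in\mathbb{N}_0$ is visited only finitely often. For Step (iii), this forces either $\liminf_n S_n=+\infty$, giving $S_n\to+\infty$, or $\liminf_n S_n=-\infty$; in the latter case $\limsup_n S_n\geq 0$ would force the walk to cross $0$ infinitely often by nearest-neighbourness, contradicting Step (i), so in fact $\limsup_n S_n=-\infty$ and $S_n\to -\infty$.

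The main technical obstacle is Step (i): one has to carefully separate the down-excursions from $0$ into a genuine sequence of i.i.d.\ Bernoulli trials under $P_{0,\omega}$, so that both applications of Borel--Cantelli (first to the $0\to -1$ transitions, then to the outcomes of the excursions) are justified. Once this is in place, the induction in Step (ii) and the nearest-neighbour dichotomy in Step (iii) are routine.
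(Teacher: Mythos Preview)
Your argument is correct in substance and shares the decisive ingredient with the paper: the quenched escape probability $P_{-1,\omega}[T_0=\infty]>0$, which holds because on $-\mathbb{N}_0$ there are no cookies and the underlying RWRE is left-transient by \ref{as_A3}. The paper, however, organises the proof more directly. Instead of first showing that $0$ is visited only finitely often and then inducting level by level, it immediately rules out a finite $\limsup_n S_n$ (if $\limsup=z$ then $z$ is visited i.o., hence so is $z+1$ since $p_z>0$) and a finite $\liminf_n S_n$ (if $\liminf=z$ then $z$ is visited i.o., and once the $M_z$ cookies are gone so is $z-1$ since $q_z>0$); only for the remaining case $\liminf=-\infty$, $\limsup=+\infty$ does it invoke $P_{0,\omega}[T_1=\infty]>0$. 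This bypasses your inductive Step~(ii) altogether, and the two one-line propagation arguments replace your Step~(iii).

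Two small points on your write-up. First, Step~(iii) as stated is incomplete: from ``every $k\ge 0$ is visited finitely often'' you jump to $\liminf_n S_n\in\{-\infty,+\infty\}$, but this does not yet exclude $\liminf_n S_n$ (or $\limsup_n S_n$ in the second branch) equal to a finite \emph{negative} integer. The missing step is the upward propagation $\{S_n=k\text{ i.o.}\}\subseteq\{S_n=k+1\text{ i.o.}\}$ for $k<0$, using $p_k>0$ and $M_k=0$; this is exactly the paper's finite-$\limsup$ argument, so in the end you need it anyway. Second, in this lemma $0\in -\mathbb{N}_0$, so $M_0=0$ $\mathbb{P}$-a.s.\ by hypothesis; your appeal to \ref{as_A4} in Step~(i) is unnecessary.
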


\begin{proof}
We first show that $P_{0,\omega}\left[-\infty <\limsup_{n\to\infty} S_n <+\infty\right]=0$ for $\mathbb{P}$-a.e.\ $\omega\in\Omega$ by using the strong Markov property for the process $H_n := (S_m)_{0\leq m\leq n}$ and Borel-Cantelli.
Assume that there exists $z\in\Z$ with $\limsup_{n\to\infty}S_n=z$. Then $z$ is visited infinitely many times. But so is $z+1$ since $p_z(\omega)>0$. This is a contradiction.

Similar considerations show that $P_{0,\omega}\left[-\infty <\liminf_{n\to\infty} S_n <+\infty\right]=0$ for $\mathbb{P}$-a.e.\ $\omega$.
If $\liminf_{n\to\infty} S_n=z$ for some $z\in\Z$, then $z$ will be visited infinitely many times. After the $M_z(\omega)^{th}$ visit, all cookies on $z$ have been removed and from this moment on, $S_n$ will visit $z-1$ infinitely many times as well, our next contradiction.

Finally, we claim that $P_{0,\omega}\left[\liminf_{n\to\infty} S_n=-\infty, \limsup_{n\to\infty} S_n =+\infty\right]=0$ for\linebreak $\mathbb{P}$-a.e.\ $\omega$.
This holds since --- under the assumptions of the lemma --- the environments on sites $z\leq 0$ satisfy case $(ii)$ of Theorem RWRE.
Hence, we have $P_{0,\omega}[T_1=\infty]>0$.
\end{proof}
A relation between an RWRE and a BPRE without immigration has already been remarked in \cite[Remark 2]{Zerner08}.
We show that our model of an ERWRE can be associated with a BPRE with immigration.
Consider $k\in\N$. Then, it holds for $\mathbb{P}$-a.e.\ $\omega\in\Omega$ that on the event $\{T_k<T_0<\infty\}$ the random walk has to do $M_k$ steps from $k$ to $k+1$ after its first visit in $k$ until all cookies are removed and until it gets a chance to return to $k-1$. Further steps from $k$ to $k+1$ can be regarded as the number of successes in coin tossing prior to the first failure with an unfair coin of probability $p_k$ to succeed.

Let $X_i^{(j)}$, $i\in\N$, $j\in\Z$, be a family of independent $\pm1$-valued random variables on $\Omega'$,
such that
\[P_{z,\omega}[X_i^{(j)}=1]=\omega(j,i)\text{  and  }P_{z,\omega}[X_i^{(j)}=-1]=1-\omega(j,i).\]
Following \cite{Zerner08}, the ERWRE can be realized recursively by
\[S_{n+1}=S_n + X_{\#\{m\leq n:\: S_m=S_n\}}^{(S_n)} \qquad\text{ for } n\geq0.\]
We call the events $\{X_i^{(j)}=1\}$ a success and $\{X_i^{(j)}=-1\}$ a failure and define
\begin{align*}
\xi_j^{(k)}&:= \# \{\text{successes in $\big(X_i^{(k)}\big)_{i>M_k}$ between the $(j-1)^{th}$ and the $j^{th}$ failure}\}\; ,\\
V_0&:=1\; ,\\
V_k&:=\xi_1^{(k)}+\ldots+\xi_{V_{k-1}}^{(k)}+M_k\; .
\end{align*}

Then, if Assumption \ref{as_A} and $(p_n, M_n)_{n\in\N}$ i.i.d.\ hold, we obtain by Definition \ref{def:GWPI1} that $(V_k)_{k\geq 0}$ is a BPRE with immigration under $P_1$, with immigrants $(M_k)_{k\geq1}$ and progeny given by $(\xi_i^{(j)})_{i,j\in\N}$, where $\xi_i^{(j)}$ has geometric distribution $geo_{\N_0}(1-p_j)$, i.e.\ $P_{1,\omega}[\xi_i^{(j)}=n]=p_j(\omega)^n\cdot (1-p_j(\omega))$ for $\mathbb{P}$-a.e.\ $\omega$.

For the formalization of the connection let us define analogously to \cite{Zerner08}
\[U_0 := 1 \text{  and  } U_k := \#\{n\geq 0: n< T_0, S_n =k, S_{n+1}=k+1\},\]
where $U_k$ denotes the number of upcrossings from $k$ to $k+1$ before the return to 0 for $k\in\N$.
Analogously to \cite[eq.\ (14),(15)]{Zerner08} the following connection between $U_k$ and $V_k$ can be formulated and proven.

\begin{lemma}
\label{lem:Relation-U-V}
Let Assumption \ref{as_A} hold and assume $(p_n, M_n)_{n\in\N}$ i.i.d.
It holds a.s.\ under $P_1$:
\begin{align*}
 &U_k=V_k\cdot\mathbbmss{1}_{\{V_j>0\; \forall 0\leq j<k\}} &\text{for all } k\geq1 \text{ on }\{T_0<\infty\} ,\\
&U_k\leq V_k
&\text{for all } k\geq1 \text{ on }\{T_0=\infty\} .
\end{align*}
\end{lemma}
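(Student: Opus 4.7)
The plan is to prove the lemma by induction on $k \geq 0$, based on a careful bookkeeping of up- and downcrossings at each site. The base case $U_0 = V_0 = 1$ is built into the definitions, so all the work is in the inductive step. Two ingredients enter: a crossing balance identifying the number of failures the walker performs at site $k$ with (almost) $U_{k-1}$, and a matching between the walker's successes at $k$ and the BPRE increments $\xi_j^{(k)}$.

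I would first set notation: let $F_k$ denote the number of down-steps $k \to k-1$ taken by the walker before $T_0$. A standard level-crossing argument then gives, on $\{T_0 < \infty\}$, that $F_k = U_{k-1}$ for every $k \geq 1$ (using the convention $U_0 := 1$, reflecting that the walker starts at $1$ and ends at $0$). On $\{T_0 = \infty\}$, Lemma~\ref{lem:Sn+-unendlich} together with the assumption $\mathbb{P}[M_z = 0] = 1$ for $z \leq 0$ forces $S_n \to +\infty$, and balancing crossings at each level $k + 1/2$ yields $F_k = U_{k-1} - 1$.

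I would then exploit the fact that at site $k$ the walker consumes the flips $X_1^{(k)}, X_2^{(k)}, \ldots$ in order, with the first $M_k$ forced to be successes (cookies) and subsequent flips providing the i.i.d.\ sequence that defines $(\xi_j^{(k)})_{j \geq 1}$. On $\{T_0 < \infty\}$ the walker's final visit to $k$ must end with a down-step, for otherwise the walker would be strictly above $k$ afterwards and could not reach $0$ without revisiting $k$; consequently the non-cookie flips used at $k$ stop exactly at the $F_k$-th failure, giving
\[ U_k = M_k + \sum_{j=1}^{F_k} \xi_j^{(k)}. \]
Together with the inductive hypothesis $U_{k-1} = V_{k-1}$ (in the case $V_j > 0$ for all $j < k$), this yields $U_k = V_k$; if instead $V_{j_0} = 0$ for some $j_0 < k$, induction gives $U_{j_0} = 0$, so the walker never reaches $j_0 + 1 \leq k$ and hence $U_k = 0$, matching the indicator.

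On $\{T_0 = \infty\}$ the walker's final visit to $k$ instead ends with an up-step (the walker leaves $k$ upward forever), so the non-cookie flips at $k$ stop strictly before the $(F_k + 1)$-th failure, and therefore
\[ U_k \leq M_k + \sum_{j=1}^{F_k + 1} \xi_j^{(k)} = M_k + \sum_{j=1}^{U_{k-1}} \xi_j^{(k)}. \]
The inductive hypothesis $U_{k-1} \leq V_{k-1}$ together with $\xi_j^{(k)} \geq 0$ then delivers $U_k \leq V_k$. The only genuinely nontrivial point in the argument is the ``last flip at $k$ is a down-step / up-step'' dichotomy, which is what prevents overcounting of successes at $k$ and which is precisely where the distinction between $\{T_0 < \infty\}$ and $\{T_0 = \infty\}$ enters.
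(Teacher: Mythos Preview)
Your argument is correct and is precisely the standard crossing-balance plus coin-flip bookkeeping that the paper defers to \cite[eq.\ (14),(15)]{Zerner08} rather than spelling out; so you have simply written down in full what the paper leaves implicit.

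One small point: you invoke Lemma~\ref{lem:Sn+-unendlich} together with the extra hypothesis $\mathbb{P}[M_z=0]=1$ for $z\leq 0$, but Lemma~\ref{lem:Relation-U-V} does not carry that hypothesis (and Lemma~\ref{lem:Sn+-unendlich} is stated under $P_0$, not $P_1$). This is harmless, since on $\{T_0=\infty\}$ the walk never visits $\{z\leq 0\}$ and the environment there is irrelevant; the fact you actually need---that on $\{T_0=\infty\}$ every positive site is visited only finitely often and hence $S_n\to+\infty$---follows directly from the argument in the proof of Lemma~\ref{lem:Sn+-unendlich} (if some $k\geq 1$ were visited infinitely often, then after the cookies at $k$ are exhausted each return has probability $q_k>0$ of stepping down, forcing infinitely many visits to $k-1$, and inductively to $1$, and then to $0$). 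It would be cleaner to state this directly rather than citing Lemma~\ref{lem:Sn+-unendlich} with an added assumption.
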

The indicator function ensures the equality after the first return to 0 of the random walker or, equivalently, the first time the population became extinct.

Let us use the same definition of recurrence from the right as in \cite[p.1962]{Zerner08}.
\begin{Definition}
\label{def:rekurrentvonrechts}
The ERWRE is called \emph{recurrent from the right}, if the first excursion to the right of 0, if there is any, is $P_0$-a.s.\ finite, i.e.\ $P_1[T_0 <\infty]=1$.
\end{Definition} 

Now the connection between ERWRE and BPRE with immigration can be formulated in the following way.
\begin{lemma}
\label{lem:ZshRekurrenzERWundGWPI}
Let Assumption \ref{as_A} hold and assume $(p_n, M_n)_{n\in\N}$ i.i.d.
The ERWRE $(S_n)_{n\geq0}$ is recurrent from the right if and only if $(V_k)_{k\geq0}$ is recurrent, i.e.\ $P_1[\exists k\in\N: V_k=0]=1$.
\end{lemma}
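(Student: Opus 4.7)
The plan is to translate between the walk's excursion from $1$ and the branching process $(V_k)$ via the already-established identity of Lemma \ref{lem:Relation-U-V}, using Lemma \ref{lem:Sn+-unendlich} to control the behaviour of $S_n$ on the event $\{T_0=\infty\}$. I would prove the two implications separately by converting $\{T_0<\infty\}$ into $\{\exists k: V_k=0\}$ and vice versa.

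For the direction $(\Leftarrow)$, I'll first invoke Lemma \ref{lem:Sn+-unendlich}, which gives $P_1[\lim_n S_n\in\{\pm\infty\}]=1$. On $\{\lim_n S_n=-\infty\}$ the walk necessarily hits $0$, so $\{T_0=\infty\}$ is contained in $\{\lim_n S_n=+\infty\}$. On this latter event a nearest-neighbour walk starting from $1$ must cross each level $k\geq 1$ at least once, hence $U_k\geq 1$ for all $k\geq 1$. Combining with the inequality $U_k\leq V_k$ on $\{T_0=\infty\}$ from Lemma \ref{lem:Relation-U-V} then shows $\{T_0=\infty\}\subseteq\{V_k>0\text{ for all }k\geq 0\}$ (using $V_0=1$ for the case $k=0$). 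The recurrence assumption $P_1[\exists k: V_k=0]=1$ therefore forces $P_1[T_0<\infty]=1$.

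For the direction $(\Rightarrow)$, I would work on $\{T_0<\infty\}$, where the trajectory up to $T_0$ is bounded so that $U_k=0$ for all sufficiently large $k$. Setting $K:=\inf\{k\geq 1: U_k=0\}<\infty$, an induction using the identity $U_k=V_k\cdot\mathbbmss{1}_{\{V_j>0\,\forall 0\leq j<k\}}$ from Lemma \ref{lem:Relation-U-V} propagates $V_j\geq 1$ from $j=0$ (where $V_0=1$) up to $j=K-1$: at each step $k<K$ the indicator equals $1$ by the inductive hypothesis, so $V_k=U_k\geq 1$. Applying the identity once more at $k=K$ then collapses it to $V_K=U_K=0$, giving $\{T_0<\infty\}\subseteq\{\exists k: V_k=0\}$ and finishing the equivalence.

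I do not expect any real obstacle here: once Lemmas \ref{lem:Sn+-unendlich} and \ref{lem:Relation-U-V} are available the argument is essentially bookkeeping. The only points to watch are the convention $U_0=V_0=1$ (so the induction and the infimum must be taken over $k\geq 1$) and the fact that passing from recurrence of $(V_k)$ to $P_1[T_0<\infty]=1$ genuinely needs Lemma \ref{lem:Sn+-unendlich} to exclude oscillatory trajectories.
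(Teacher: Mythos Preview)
Your proposal is correct and follows essentially the same route as the paper: both directions are obtained from Lemmas \ref{lem:Sn+-unendlich} and \ref{lem:Relation-U-V}, establishing the $P_1$-a.s.\ identity $\{T_0=\infty\}=\{V_k>0\ \forall k\geq 1\}$. The only cosmetic difference is that you carry out the $(\Rightarrow)$ direction via an explicit induction up to the first $K$ with $U_K=0$, whereas the paper argues more tersely that on $\{T_0<\infty\}$ the identity $U_k=V_k\mathbbmss{1}_{\{V_j>0\,\forall j<k\}}$ forces $\{V_k>0\ \forall k\}=\{U_k>0\ \forall k\}$, which is empty there.
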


\begin{proof}
The definition of $U_k$, Lemma \ref{lem:Sn+-unendlich} and Lemma \ref{lem:Relation-U-V} yield
\begin{equation}
\{T_0=\infty\}\overset{P_{1}}{=}\{U_k>0\;\forall k\geq1\} \overset{P_1}{\subseteq} \{V_k>0 \; \forall k\geq1\}.
\label{eq:Tunendlich_Upositiv}
\end{equation}
On the other hand we get by Lemma \ref{lem:Relation-U-V} and (\ref{eq:Tunendlich_Upositiv}) that
\[ P_1[V_k>0 \;\forall k\geq1, T_0<\infty]=P_1[U_k>0 \;\forall k\geq1, T_0<\infty]=0\]
and hence
\[\{V_k>0 \; \forall k\geq1\}\overset{P_1}{\subseteq} \{T_0=\infty\}.\]
Now, the lemma follows since
\begin{equation*}
\label{eq:1}
 P_1[T_0=\infty]=P_1[V_k>0 \;\forall k\geq1]=1-P_1[\exists k\in\mathbb{N}:\; V_k=0].
\end{equation*}
\end{proof}

\begin{lemma}
\label{lem:finite_excursion}
Let Assumption \ref{as_A} hold. Assume $\mathbb{P}[M_z=0]=1$ for all $z\in -\N_0$ and $(p_n, M_n)_{n\in\N}$ i.i.d.
 If $(S_n)_{n\geq0}$ is recurrent from the right, then all excursions are $P_0$-a.s.\ finite and $P_0$-a.s.\ there are only finitely many.
\end{lemma}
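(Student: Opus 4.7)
The plan is to show that the walker visits $0$ only finitely many times under $P_0$; from this it follows that each excursion (the piece of trajectory between two consecutive visits to $0$) is of finite length and that there are only finitely many of them. Note that Lemma \ref{lem:Sn+-unendlich} alone already yields finitely many visits to $0$ via $\lim_n S_n \in \{\pm\infty\}$ $P_0$-a.s., but the hypothesis of recurrence from the right lets one give the more informative escape-probability argument below, which additionally identifies $-\infty$ as the direction of escape.

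Concretely, I would set $r(\omega) := P_{-1,\omega}[T_0 = \infty]$ and argue $r(\omega) > 0$ for $\mathbb{P}$-a.e.\ $\omega$. Using Assumption \ref{as_A3} together with part $(ii)$ of Theorem RWRE applied to the cookie-free environment $(p_z)_{z \in \Z}$, one has $S_n \to -\infty$ a.s.\ for the underlying RWRE; if instead $P_{-1,\omega}[T_0 < \infty] = 1$ on a non-null set of environments, the strong Markov property at the first visit of $-1$ from $0$ would force the RWRE started at $0$ to return to $0$ infinitely often, a contradiction. By Assumption \ref{as_A4}, $M_0(\omega) < \infty$ a.s. The first $M_0$ visits to $0$ are cookie-driven right steps, each followed by a right excursion that returns to $0$ a.s.\ by recurrence from the right combined with a monotone coupling (depletion of right-side cookies can only shorten the return time). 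From the $(M_0{+}1)$-st visit on, the transition at $0$ is Bernoulli$(p_0)$: with probability $q_0(\omega) > 0$ the walker steps to $-1$, and from there never returns to $0$ with probability $r(\omega) > 0$, since no cookies are ever encountered on $-\N_0$. Applying the strong Markov property at successive visits to $0$, the number of post-cookie visits is a geometric random variable with parameter $q_0(\omega)\, r(\omega) > 0$, hence $P_{0,\omega}$-a.s.\ finite; integrating over $\omega$ gives $P_0$-a.s.\ finitely many visits, and the round-trip excursions between consecutive visits are finite by construction.

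The main obstacle I anticipate is the monotone coupling used to propagate recurrence from the right to subsequent, cookie-depleted right excursions. A standard coupling using shared uniform randomness shows that two walks with nested cookie configurations satisfy $S_n^{\text{fewer}} \le S_n^{\text{more}}$ pointwise, so the hitting time of $0$ from $+1$ is pointwise bounded by the corresponding time with the original cookies and hence remains a.s.\ finite after depletion. This verification is standard but must be set up carefully to close the escape argument.
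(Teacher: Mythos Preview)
Your argument is correct. The paper's own proof is considerably shorter: for the first assertion (all right excursions are finite) it simply cites \cite[Lemma 8]{Zerner08}, and for the second (only finitely many) it observes that infinitely many right excursions would mean infinitely many visits to $1$, contradicting Lemma~\ref{lem:Sn+-unendlich}. You explicitly note this shortcut yourself and then give a more hands-on argument: you establish finiteness of subsequent right excursions via the monotone coupling (which is essentially what lies behind \cite[Lemma 8]{Zerner08}), and you replace the one-line contradiction by an explicit escape computation through $-1$ with success probability $q_0(\omega)r(\omega)$. Your route is longer but more self-contained, and as you point out it already pins down $-\infty$ as the escape direction, anticipating Lemma~\ref{lem:Divergenz}.

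One small cleanup: the lemma's hypothesis is $\mathbb{P}[M_z=0]=1$ for all $z\in -\N_0$, and since $0\in -\N_0$ this forces $M_0=0$ a.s. So the ``first $M_0$ cookie-driven visits to $0$'' clause is vacuous and you can drop it; the transition at $0$ is Bernoulli$(p_0)$ from the very first visit. This does not affect the validity of your argument---the coupling is still needed because the second, third, \ldots\ right excursions face depleted cookie stacks on $\N$---but it streamlines the write-up.
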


\begin{proof}
The first statement can be proven equivalently to the one in \cite[Lemma 8]{Zerner08}.
For the second statement assume that there exist infinitely many excursions to the right with positive probability. Then $S_n$ visits site 1 infinitely many times with positive probability. This is a contradiction to Lemma \ref{lem:Sn+-unendlich}.
\end{proof}

\begin{lemma}
\label{lem:Divergenz}
Let Assumption \ref{as_A} hold. Assume $\mathbb{P}[M_z=0]=1$ for all $z\in -\N_0$ and $(p_n, M_n)_{n\in\N}$ i.i.d.
 If $(S_n)_{n\geq0}$ is recurrent from the right, then $P_0\left[\lim_{n\to\infty}S_n=-\infty\right]=1$. If $(S_n)_{n\geq0}$ is not recurrent from the right, then $P_0\left[\lim_{n\to\infty}S_n=+\infty\right]>0$.
\end{lemma}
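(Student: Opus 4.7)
The plan is to combine an excursion/Markov argument with Lemma~\ref{lem:Sn+-unendlich} in each direction, since that lemma already reduces the task to ruling out the unwanted sign of $\lim_n S_n$.

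For the first implication, assume $(S_n)_{n\geq 0}$ is recurrent from the right. By Lemma~\ref{lem:finite_excursion} there are $P_0$-a.s.\ only finitely many excursions to the right of $0$, each of them finite. Hence $P_0$-a.s.\ there exists a random time $\tau<\infty$ after which the walk never visits $1$ again, and by the nearest-neighbour property this forces $S_n\leq 0$ for all $n\geq \tau$. Combined with Lemma~\ref{lem:Sn+-unendlich}, the only compatible option is $\lim_n S_n=-\infty$.

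For the second implication, assume $P_1[T_0=\infty]>0$ (Definition~\ref{def:rekurrentvonrechts}). Since $\omega(0,1)>0$ $\mathbb{P}$-a.s.\ by \ref{as_A1} (the value being $1$ if $M_0\geq 1$ and $p_0\in(0,1)$ if $M_0=0$), one has $P_0[T_1<\infty]\geq P_0[S_1=1]=\mathbb{E}[\omega(0,1)]>0$. I then apply the strong Markov property at the stopping time $T_1$: because $\mathbb{P}[M_z=0]=1$ for $z\in -\N_0$ there are no cookies on the non-positive half-line, and because $S_n\leq 0$ for $n<T_1$ no cookie on any site $\geq 1$ has been consumed by time $T_1$. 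Hence the cookie configuration at time $T_1$ coincides with the initial one, so the post-$T_1$ process has the same annealed law as the walk under $P_1$, and
\[
P_0[T_1<\infty,\ T_0\circ\theta_{T_1}=\infty]=P_0[T_1<\infty]\cdot P_1[T_0=\infty]>0.
\]
On this event $S_n\geq 1$ for every $n\geq T_1$, so a final appeal to Lemma~\ref{lem:Sn+-unendlich} yields $\lim_n S_n=+\infty$, and therefore $P_0[\lim_n S_n=+\infty]>0$.

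The only step requiring real care is the strong Markov application, since excited walks are not Markovian in $(S_n)$ alone: the conditional future depends on the cookie configuration. The observation that no cookie anywhere has been consumed by time $T_1$ is exactly what aligns the law of the restarted walk with the original $P_1$-law; the rest of the argument is packaging together Lemmas~\ref{lem:Sn+-unendlich} and~\ref{lem:finite_excursion} with elementary path considerations.
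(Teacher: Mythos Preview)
Your argument is correct and close to the paper's. The first implication is handled identically via Lemmas~\ref{lem:finite_excursion} and~\ref{lem:Sn+-unendlich}. For the second, the paper takes a small detour through the branching-process connection (Lemma~\ref{lem:ZshRekurrenzERWundGWPI}) to restate $P_1[T_0=\infty]>0$ as $P_1[V_k>0\ \forall k]>0$ before invoking Lemma~\ref{lem:Sn+-unendlich}, and then leaves the passage from $P_1$ to $P_0$ implicit; you instead work directly from Definition~\ref{def:rekurrentvonrechts} and make that passage explicit, which is a bit cleaner. One point to tighten: the displayed factorization under the \emph{annealed} law does not follow from the strong Markov property alone, since conditioning on $\{T_1<\infty\}$ biases the environment on $-\N_0$. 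What makes it go through is that $P_{0,\omega}[T_1<\infty]$ is measurable with respect to $(p_z)_{z\leq 0}$ while $P_{1,\omega}[T_0=\infty]$ is measurable with respect to $(\omega(z,\cdot))_{z\geq 1}$, and these are independent under $\mathbb{P}$ by \ref{as_A2}; with that observation the equality (and in any case the needed positivity) is justified.
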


\begin{proof}
If the ERWRE is recurrent from the right, then according to Lemma \ref{lem:finite_excursion} all excursions are $P_0$-a.s.\ finite and there are only finitely many.
Therefore, we get $P_0\left[\lim_{n\to\infty}S_n= +\infty\right]=0$ and, applying Lemma \ref{lem:Sn+-unendlich}, $P_0\left[\lim_{n\to\infty}S_n= -\infty\right]=1$.

If $(S_n)_{n\geq0}$ is not recurrent from the right, the corresponding BPRE with immigration is transient by Lemma \ref{lem:ZshRekurrenzERWundGWPI}, i.e.\ $P_1[\exists k\in\mathbb{N}:V_k=0]<1$, and hence
\[0<P_1\left[V_k>0 \;\forall k\geq1\right]=P_1\left[T_0=\infty\right]\leq P_1\left[\lim\limits_{n\to\infty}S_n=+\infty\right].\]
The lemma follows.
\end{proof}

Lemmata \ref{lem:Divergenz} and \ref{lem:ZshRekurrenzERWundGWPI} finally show

\begin{proposition}
\label{theorem:combinationGWPI_and_ERW}
Let Assumption \ref{as_A} hold. Assume $\mathbb{P}[M_z=0]=1$ for all $z\in -\N_0$ and $(p_n, M_n)_{n\in\N}$ i.i.d.
 If $(V_k)_{k\geq0}$ is recurrent, then $P_0\left[\lim_{n\to\infty}S_n=-\infty\right]=1$. If $(V_k)_{k\geq0}$ is transient, then $P_0\left[\lim_{n\to\infty}S_n=+\infty\right]>0$.
\end{proposition}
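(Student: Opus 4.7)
The plan is to combine the two key lemmata that have already been established: Lemma \ref{lem:ZshRekurrenzERWundGWPI} provides the dictionary between recurrence/transience of the associated branching process $(V_k)_{k\geq 0}$ and recurrence from the right of the walk, while Lemma \ref{lem:Divergenz} translates recurrence-from-the-right information into the desired limiting behavior of $S_n$. Both lemmata have been proved under exactly the hypotheses assumed in this proposition (Assumption \ref{as_A}, $\mathbb{P}[M_z=0]=1$ for $z\in -\N_0$, and $(p_n,M_n)_{n\in\N}$ i.i.d.), so the proof amounts to threading these two equivalences together.

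First I would treat the case in which $(V_k)_{k\geq 0}$ is recurrent, meaning $P_1[\exists k\in\N:\, V_k=0]=1$. By Lemma \ref{lem:ZshRekurrenzERWundGWPI}, this is equivalent to the ERWRE being recurrent from the right in the sense of Definition \ref{def:rekurrentvonrechts}. The first half of Lemma \ref{lem:Divergenz} then yields directly $P_0[\lim_{n\to\infty} S_n = -\infty]=1$.

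Next I would treat the complementary case where $(V_k)_{k\geq 0}$ is transient, i.e.\ $P_1[\exists k\in\N:\, V_k=0]<1$. Again Lemma \ref{lem:ZshRekurrenzERWundGWPI} translates this into the statement that the walk is not recurrent from the right, and the second half of Lemma \ref{lem:Divergenz} immediately gives $P_0[\lim_{n\to\infty} S_n = +\infty]>0$, completing the proof.

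I do not expect a genuine obstacle here: the substantive content has been carried out already in Lemmata \ref{lem:Sn+-unendlich}, \ref{lem:Relation-U-V}, \ref{lem:ZshRekurrenzERWundGWPI} and \ref{lem:Divergenz}, which together cover the dichotomy $S_n\to\pm\infty$, the coupling $U_k\leq V_k$ with equality up to the first extinction, and the passage from recurrence-from-the-right to the strong limit. The only point that merits a brief sentence is that all these lemmata share the hypotheses of the present proposition, so the compositions are legitimate.
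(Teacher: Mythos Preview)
Your proposal is correct and matches the paper's own proof, which simply states that the proposition follows from Lemmata \ref{lem:ZshRekurrenzERWundGWPI} and \ref{lem:Divergenz}. You have merely spelled out explicitly how the two halves of each lemma pair up, which is exactly the intended argument.
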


The above results enable us to prove Proposition \ref{theorem:ERWRE_right}:

\begin{proof}[Proof of Proposition \ref{theorem:ERWRE_right}]
The BPRE with immigration $(V_k)_{k\geq 0}$ that corresponds to $(S_n)_{n\geq0}$ is given by immigrants $(M_n)_{n\geq1}$ and offspring distribution $geo_{\N_0}(1-p_j)$, $j\in\N$. Therefore, given an environment $\omega\in\Omega$,
the expected number of offspring produced by a single particle in the $(j-1)^{th}$ generation is
\begin{equation}
\label{eq:mu}
 \mu_j(\omega):=E_{0,\omega}[\xi_1^{(j)}]=\frac{p_j(\omega)}{1-p_j(\omega)}=\rho_j^{-1}(\omega)
\end{equation}
and
\begin{equation}
\label{eq:var}
 \Var_{\omega}(\xi_1^{(j)})=\frac{p_j(\omega)}{(1-p_j(\omega))^2}\; .
\end{equation}

In order to apply Theorems \ref{theorem:GWPI_in_RE} and \ref{theorem:GWPI_in_RE2} we first check if the particular assumptions are satisfied. We have $P_0=Q$. Assumption \ref{as_A} includes by ($\ref{eq:mu}$) that $E_Q[|\log (\mu_1)|]<\infty$, $E_Q[\log (\mu_1)]<0$, $P_0[M_1=0]>0$ and $P_{0,\omega}[\xi_1^{(1)}=0]=1-p_1(\omega)>0$ $P_0$-a.s. Since $\mathbb{E}[p_0^{-1}]<\infty$ we obtain that $E_0[\mu_1^{-1}]\leq E_0[p_1^{-1}]<\infty$ and $E_0[\Var_{\omega}(\xi_1^{(1)})\cdot \mu_1(\omega)^{-2}]=E_0[p_1^{-1}]<\infty$ by (\ref{eq:var}). Hence, the proof is completed by combining
Theorems \ref{theorem:GWPI_in_RE}, \ref{theorem:GWPI_in_RE2} and Proposition \ref{theorem:combinationGWPI_and_ERW}.
\end{proof}

\section{Random walk in a random environment with cookies on the negative integers}
\label{sec:neg_halfline}

In this section we put cookies of maximal strength on the negative integers only and prove the following proposition for the ERWRE.
\begin{proposition}
\label{RWRE_ERW_left}
Let Assumption \ref{as_A} 
hold and assume $\mathbb{P}[M_n=0]=1$ for all $n\in \N_0$
and $(p_z, M_z)_{z\in -\N}$ i.i.d.
\begin{itemize}
\item[(i)]  If $\mathbb{E}[(\log M_{-1})_+]<\infty$, then $\lim_{n\to\infty}S_n= -\infty$ $P_0$-a.s.
\item[(ii)]  If $\mathbb{E}[(\log M_{-1})_+]=\infty$, then $\liminf_{n\to\infty}S_n= -\infty$ and $\limsup_{n\to\infty}S_n= +\infty$ $P_0$-a.s.
\end{itemize}
\end{proposition}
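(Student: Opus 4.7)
My plan is to mirror the positive-cookie proof in Section~\ref{sec:pos_halfline} by associating to a left excursion a branching process in a random environment with \emph{emigration} (rather than immigration) and reading off both statements from the corresponding extinction/survival dichotomy. The preparatory step is a Lemma~\ref{lem:Sn+-unendlich}-type dichotomy: $P_0$-a.s.\ either $\lim_n S_n=-\infty$ or $\liminf_n S_n=-\infty$ and $\limsup_n S_n=+\infty$. The three arguments from Lemma~\ref{lem:Sn+-unendlich} carry over, and the only extra case to rule out is $\lim_n S_n=+\infty$, which is impossible because on that event the walker is eventually in the cookie-free region $\N_0$, where it is a pure left-transient RWRE.

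Next, let $D_k$ denote the number of downcrossings from $-(k-1)$ to $-k$ during the first left excursion $[0,T_0^+)$. At $-k$ the walker makes $V_k=D_k+D_{k+1}$ visits---of which $D_k$ yield up-steps and $D_{k+1}$ yield down-steps, with the first $\min(V_k,M_{-k})$ visits forced upward by cookies---and a negative-binomial bookkeeping argument gives, conditional on $D_k$ and $\omega$,
\[
D_{k+1}\;=\;\sum_{j=1}^{(D_k-M_{-k})^+}\xi_j^{(k)},\qquad \xi_j^{(k)}\stackrel{\mathrm{iid}}{\sim}\mathrm{Geo}_{\N_0}(p_{-k}),
\]
identifying $(D_k)_{k\ge 1}$ as a \emph{supercritical} BPRE (offspring mean $\rho_{-k}$ with $\mathbb{E}[\log\rho_0]>0$) subject to emigration $M_{-k}$. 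Analogues of Lemmas~\ref{lem:Relation-U-V}--\ref{lem:ZshRekurrenzERWundGWPI} identify $\{T_0^+=\infty,\,S_1=-1\}$ with $\{D_k>0\ \forall k\}$; combined with a.s.\ return of right excursions (pure left-transient RWRE on $\N_0$) and a monotone renewal argument over successive returns to $0$---exploiting that cookie depletion only facilitates escape---part~(i) reduces to $P_{0,\omega}[D_k>0\ \forall k]>0$ for $\mathbb{P}$-a.e.\ $\omega$, and part~(ii) to $P_{0,\omega}[D_k>0\ \forall k]=0$ for $\mathbb{P}$-a.e.\ $\omega$.

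Part~(ii) I would prove by coupling $D_k\le D_k'$ with the pure BPRE $D_k'$ (no emigration). Since $\mathbb{E}[\log\rho_0]>0$, $D_k'$ is supercritical; the Kesten--Stigum martingale $D_k'/\prod_{i=1}^{k-1}\rho_{-i}$ converges a.s.\ to a finite limit, and using the strong law $\prod_{i=1}^{k-1}\rho_{-i}=\exp(k\mathbb{E}[\log\rho_0]+o(k))$ this gives $D_k'\le C(\omega)\exp(k\mathbb{E}[\log\rho_0]+o(k))$ a.s. On the other hand $\mathbb{E}[(\log M_{-1})_+]=\infty$ together with Borel--Cantelli applied to the iid sequence $(M_{-k})$ forces $M_{-k}>\exp(ck)$ infinitely often a.s.\ for every $c>0$; picking $c>\mathbb{E}[\log\rho_0]$ makes $M_{-k}>D_k$ i.o., hence $D_{k+1}=0$ i.o., and $(D_k)$ becomes extinct. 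Part~(i) is the main obstacle: Borel--Cantelli under $\mathbb{E}[(\log M_{-1})_+]<\infty$ only gives $M_{-k}\le\exp(ak)$ eventually a.s.\ for any $a>0$, while the coupling provides only the wrong-direction inequality $D_k\le D_k'$, so a direct \emph{lower} bound on $D_k$ must be constructed. A plausible route is a conditioning-and-Lyapunov argument on $\log D_k-k\,\mathbb{E}[\log\rho_0]$: restrict to the positive-probability event on which $D_k'$ survives and realizes its Kesten--Stigum exponential growth, condition on an initial boost $D_K\ge\exp(bK)$ with $a<b<\mathbb{E}[\log\rho_0]$, and show that from generation $K$ onward the emigration $M_{-k}\le e^{ak}$ remains a vanishing fraction of $D_k$, so $(D_k)$ survives with positive probability.
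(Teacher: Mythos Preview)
Your approach via a branching process with emigration is genuinely different from the paper's, which never introduces a branching process in this section. The paper observes (Lemma~\ref{lemma3*}) that $T_{-m}<\infty$ a.s.\ for every $m$, and that at time $T_{-m}$ all cookies in $(-m,\infty)$ have been consumed. From that moment the walk between $-m$ and any target $z>-m$ is a pure RWRE, so the event $A_m(z)=\{\text{after }T_{-m},\text{ hit }-m-1\text{ before }z\}$ has an explicit quenched probability from the gambler's-ruin formula (Lemma~\ref{lemma:Tk<T0}). The $A_m(z)$ are quenched-independent in $m$, so Borel--Cantelli decides whether $\{S_n<z\text{ eventually}\}$ holds, and Lemma~\ref{lemma2} reduces $\sum_m P_{0,\omega}[A_m^c(z)]$ to a random power series $\sum_n M_{-n}\prod_{j<n}\rho_{-j}^{-1}$, whose finiteness is governed exactly by $\mathbb{E}[(\log M_{-1})_+]$ (Lemma~\ref{lemma_powerseries}). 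This is elementary and handles both directions symmetrically.

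Your part~(ii) is essentially sound: $D_k\le D_k'$, martingale convergence for $D_k'/\prod\rho_{-i}$, and the infinite-mean Borel--Cantelli for $M_{-k}$ do force extinction of the first left excursion. You should add that the same bound applies to every later left excursion (cookies below the depth reached so far are untouched), and spell out why $0$ being visited infinitely often forces $\limsup S_n=+\infty$ and why Lemma~\ref{lemma3*} already gives $\liminf S_n=-\infty$.

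Part~(i), however, has a real gap. Your stated reduction to ``$P_{0,\omega}[D_k>0\ \forall k]>0$ for $\mathbb{P}$-a.e.\ $\omega$'' is false: whenever $M_{-1}(\omega)\ge 1$ --- an event of positive $\mathbb{P}$-probability --- the first left excursion deterministically returns after one step, so $D_2=0$ and $P_{0,\omega}[D_k>0\ \forall k]=0$. The monotone renewal argument goes in the right direction (depleting cookies can only raise the escape probability), but it does not yield the quenched statement you claim; at best it reduces the problem to showing that for a.e.\ $\omega$ \emph{some} depleted configuration reachable from $\omega$ admits escape with positive probability. Your survival sketch is also circular: you condition on $D_K\ge e^{bK}$ in order to argue that emigration is negligible thereafter, but producing that boost is exactly the difficulty, and restricting to $\{D_k'\text{ survives and grows}\}$ gives no lower bound on $D_k$. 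The paper bypasses all of this by never trying to control a single excursion; the events $A_m(z)$ aggregate over all excursions automatically and need only the estimate $P_{0,\omega}[A_m^c(z)]\le M_{-m}/\prod_{j}\rho_{-j}+\text{(summable term)}$, from which $\sum_m P_{0,\omega}[A_m^c(z)]<\infty$ follows immediately under $\mathbb{E}[(\log M_{-1})_+]<\infty$.
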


The process can be illustrated by running against a wall and being thrown back. The random walk starts in 0 and, according to the environment, is pushed more or less to the left side, i.e.\ towards the negative integers, see Theorem RWRE $(ii)$.
But on this side it encounters sites with cookies, that push it back to the right, like a wall that it can't pass. Every time it returns to this site --- it will return, since a left-transient environment is considered --- the wall becomes smaller until it is all gone and the process can jump further left.
Naturally, the question arises how tall the walls have to be in order to make the random walker return to 0 between spotting and destroying a wall and in which cases this happens infinitely many times.
First, let us state that each negative integer, or each wall, will be reached by $(S_n)_{n\geq 0}$.
\begin{lemma}
\label{lemma3*}
Let Assumption \ref{as_A} 
hold. Assume $\mathbb{P}[M_n=0]=1$ for all $n\in \N_0$ and
$(p_z, M_z)_{z\in -\N}$ i.i.d.
Then $T_k<\infty$ $P_{0}$-a.s.\ for all $k\in-\N$.
\end{lemma}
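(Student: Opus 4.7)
The plan is to prove $\liminf_{n\to\infty} S_n = -\infty$ $P_0$-a.s., from which $T_k < \infty$ $P_0$-a.s.\ for every $k \in -\N$ follows at once. To this end I separately rule out $\{\liminf_{n\to\infty} S_n = m\}$ for each $m \in \Z$ and $\{\lim_{n\to\infty} S_n = +\infty\}$.

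To exclude $\liminf S_n = m$ for a fixed $m \in \Z$: on this event the walker visits $m$ infinitely often, and by Assumption~\ref{as_A4} the number $M_m$ of cookies at $m$ is $\mathbb{P}$-a.s.\ finite. Hence after the $(M_m+1)$-th visit to $m$ — a $P_0$-a.s.\ finite strong Markov time — all cookies at $m$ are gone, and from then on each successive visit to $m$ produces, conditionally on $\omega$ and the Markov past, an independent step to $m-1$ with probability $1 - p_m(\omega) > 0$ (by \ref{as_A1}). The conditional Borel--Cantelli lemma (given $\omega$) then forces the walker to step from $m$ to $m-1$ infinitely often, yielding $\liminf S_n \leq m - 1$ $P_0$-a.s., a contradiction.

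To exclude $\lim S_n = +\infty$: on this event the origin is visited only finitely often, so $\tau := \sup\{n \geq 0 : S_n = 0\}$ is finite, and since $|S_{n+1}-S_n|=1$ one must have $S_{\tau + 1} = 1$ and $S_n \geq 1$ for all $n > \tau$. For each fixed $t \geq 0$, on $\{\tau = t, S_{t + 1} = 1\}$ the walker remains in $\N$ after time $t + 1$ and encounters no cookies (there are none at sites in $\N_0$, and sites in $-\N$ are not visited). Consequently, under $P_0$, the trajectory $(S_{t + 1 + n})_{n \geq 0}$ conditional on $S_{t+1} = 1$ is distributed as a plain RWRE in the environment $(p_z)_{z \in \Z}$ started at $1$, which by Theorem~RWRE$(ii)$ and Assumption~\ref{as_A3} hits $0$ in finite time $P$-a.s.\ — contradicting $S_n \geq 1$ for all $n > t$. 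Decomposing over $t$ gives $P_0[\lim S_n = +\infty] = 0$; combined with the previous paragraph, $\liminf S_n = -\infty$ $P_0$-a.s.

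The main obstacle is the second step, because $\tau$ is a terminal (last-visit) time rather than a stopping time, so the strong Markov property cannot be applied at $\tau$ directly. Fixing $\tau = t$ and applying the strong Markov property at the deterministic time $t + 1$ bypasses this, and the key observation is that once the walker enters $\N$ without returning to $0$ the cookies on $-\N$ are never consumed, so its post-$\tau$ dynamics are exactly those of a pure RWRE to which Solomon's Theorem~RWRE can be applied.
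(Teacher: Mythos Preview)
Your proof is correct but takes a different route from the paper. The paper argues by induction on $k$: for $k=-1$ the walk on $\N_0$ is a plain left-transient RWRE so it hits $-1$; for the step $k\to k-1$, after time $T_k$ each right-excursion from $k$ is a.s.\ finite (since on sites $>k$ the dynamics are, after finitely many cookie steps, those of a left-transient RWRE), so the walk revisits $k$ at least $M_k$ times and then steps to $k-1$. You instead prove the stronger global statement $\liminf_{n\to\infty} S_n=-\infty$ by separately excluding $\liminf=m\in\Z$ (Borel--Cantelli after the cookies at $m$ are exhausted) and $\lim S_n=+\infty$ (on $\N_0$ the walk coincides with a plain RWRE until it hits $0$, which it does a.s.\ by Theorem~RWRE$(ii)$). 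This mirrors the style of the paper's Lemma~\ref{lem:Sn+-unendlich} rather than the inductive argument the paper gives here, and it yields directly a fact the paper only extracts later through the analysis of the events $A_m(z)$.

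One point of presentation: the sentence ``the trajectory $(S_{t+1+n})_{n\ge0}$ conditional on $S_{t+1}=1$ is distributed as a plain RWRE'' is not literally true, since the post-$t{+}1$ walk could in principle visit negative sites where cookies sit. What you actually use---and should state---is that, under $P_{0,\omega}$ and conditional on $\mathcal{F}_{t+1}$ with $S_{t+1}=1$, the walk agrees with a plain RWRE \emph{up to the first hitting of $0$}, because no cookies exist on $\N_0$; hence $P_{0,\omega}[S_n\ge1\ \forall n>t\,|\,\mathcal{F}_{t+1}]$ equals the quenched RWRE probability of never hitting $0$ from $1$, which vanishes for $\mathbb{P}$-a.e.\ $\omega$. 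With this minor rephrasing the argument is clean.
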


\begin{proof}
If all cookies are removed we get a left-transient RWRE, i.e.\ an RWRE that tends to $-\infty$ $P_{0}$-a.s., by Theorem RWRE $(ii)$.
Now the lemma is proven by induction.
For $k=-1$ the statement is the same as for a left-transient RWRE. Therefore, it is true.
Let it hold for $k\in -\N$. Then, we get for $\mathbb{P}$-a.e.\ $\omega$,
\begin{align*}
 P_{0,\omega}[T_{k-1}<\infty]
= P_{0,\omega}[T_k<\infty,\inf\{n\in\N: S_{T_k +n}=k-1\}<\infty] 
= 1
\end{align*}
since $(S_{T_k +n})_{n\geq 0}$ acts like an RWRE on sites larger than $k$. Therefore, it returns to $k$ at least $M_k(\omega)$-times and reaches $k-1$ afterwards as well.
\end{proof}

Let $z\in\Z$. We define for $m\in\N$ with $-m<z$
\[A_m(z):=\{\text{after the first visit in }-m \quad (S_n)_{n\geq0} \text{ reaches }-m-1\text{ before }z\}.\]
Note that there are a.s.\ no cookies on the positive integers and by the first visit in $-m$ all cookies on sites larger than $-m$ have been removed. Since $\mathbb{E}[\log \rho_0]>0$ and by Theorem RWRE $(ii)$, 
the random walker returns to $-m$ from every excursion to the right after his first visit in $-m$ $P_{0,\omega}$-a.s.\ for $\mathbb{P}$-a.e.\ $\omega$. Therefore, 
\begin{equation}
\label{eq:limsupA_n}
 \{S_n=z \text{ infinitely often}\}^c\overset{P_0}{=}\{S_n<z \text{ eventually}\}\overset{P_0}{=}\bigcup_{k> (0\vee -z)}\bigcap_{m\geq k}A_m(z) ,
\end{equation}
where $\{\cdot\}^c$ denotes the complement of set $\{\cdot\}$.

By the strong Markov property $(A_n(z))_{n\in\N}$ is independent under $P_{0,\omega}$ for $\mathbb{P}$-a.e.\ $\omega\in\Omega$ and thus, by (\ref{eq:limsupA_n}) and Borel-Cantelli,
\begin{align}
\label{eq_Borel_Cantelli}
 P_{0,\omega}[S_n=z \text{ infinitely often}]
  &= \begin{cases}
  0 & \text{if } \sum_{n> (0\vee -z)}{P_{0,\omega}[A_n^c(z)]}<\infty \\
  1 & \text{if } \sum_{n> (0\vee -z)}{P_{0,\omega}[A_n^c(z)]}=\infty
      \end{cases}\; .
\end{align}

To get some criteria for convergence or divergence of this sum, we first 
have a closer look at an ordinary RWRE. The next lemma can be deduced from \cite[eq.\ (2.1.4)]{Zeitouni}.

\begin{lemma}
\label{lemma:Tk<T0}
Let $z\in\Z$, $k\in\N$ with $-k<z$.
The quenched probability for an RWRE with start in $-k$ to hit $-k-1$ before $z$ is 
\[1-\frac{1}{1+\sum_{l=-z+1}^{k}\prod_{j=l}^{k}\rho_{-j}}\; .\]
\end{lemma}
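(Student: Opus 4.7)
The plan is to treat this as a standard hitting-probability computation for a birth-death chain on the finite interval $\{-k-1,\ldots,z\}$, absorbed at the endpoints. Since, starting from $-k$, all sites visited on the way to either $-k-1$ or $z$ lie strictly between them and carry no cookies (the cookies on $-k-1$ are not touched before reaching it, and by assumption there are none on $\N_0$), the walk there is an ordinary (inhomogeneous) nearest-neighbour chain with transition probabilities $p_y,q_y$. Hence one can apply the classical formula for inhomogeneous birth-death hitting probabilities, which is essentially the content of the reference to \cite[eq.\ (2.1.4)]{Zeitouni}.

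Concretely, I would set $u(x):=P_{x,\omega}[T_{-k-1}<T_z]$ for $-k-1\leq x\leq z$, with boundary values $u(-k-1)=1$, $u(z)=0$. The harmonic equation $u(x)=p_x u(x+1)+q_x u(x-1)$ rewrites as $u(x+1)-u(x)=\rho_x\bigl(u(x)-u(x-1)\bigr)$, so the increments satisfy $d(y):=u(y)-u(y-1)=d(-k)\prod_{j=-k}^{y-1}\rho_j$ for $-k\leq y\leq z$ (with the empty product equal to $1$). Telescoping,
\begin{equation*}
-1=u(z)-u(-k-1)=d(-k)\sum_{y=-k}^{z}\prod_{j=-k}^{y-1}\rho_j,
\end{equation*}
so $u(-k)=1+d(-k)=1-\bigl(1+\sum_{y=-k+1}^{z}\prod_{j=-k}^{y-1}\rho_j\bigr)^{-1}$.

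The last step is a purely cosmetic reindexing to match the form stated in the lemma: the substitution $l=-y+1$ (equivalently $m=-j$) converts $\sum_{y=-k+1}^{z}\prod_{j=-k}^{y-1}\rho_j$ into $\sum_{l=-z+1}^{k}\prod_{j=l}^{k}\rho_{-j}$, which is what we need. There is essentially no obstacle here — the only thing to be careful about is bookkeeping with the index ranges and the convention for the empty product, and the verification that the walk really is an ordinary RWRE on $\{-k,\ldots,z-1\}$ during the excursion of interest (no cookies are encountered before the first visit to $-k-1$ or return to $z$).
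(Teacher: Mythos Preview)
Your derivation is correct: this is the standard harmonic-function computation for a one-dimensional birth--death chain, and your reindexing to the stated form is accurate. The paper does not give a proof of this lemma but simply cites \cite[eq.\ (2.1.4)]{Zeitouni}, and your argument is precisely the classical derivation behind that reference; note, however, that the remarks about cookies are extraneous here, since the lemma as stated concerns an ordinary RWRE (the cookie considerations become relevant only when the lemma is applied in the subsequent result).
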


This result will be employed to prove the following lemma.
\begin{lemma}
\label{lemma2}
Let Assumption \ref{as_A} 
hold. Assume $\mathbb{P}[M_n=0]=1$ for all $n\in \N_0$ and $(p_z, M_z)_{z\in -\N}$ i.i.d.
For $z\in\Z$ and for $\mathbb{P}$-a.e.\ $\omega\in\Omega$ the following holds:
 \begin{itemize}
  \item[(i)]  If $\sum_{n>(0\vee -z)} P_{0,\omega}[A_n^c(z)]=\infty$, then 
 $\sum_{n\in\N}{M_{-n}(\omega)}{\prod_{j=1}^{n-1}\rho_{-j}^{-1}(\omega)}=\infty$.
  \item[(ii)] If $\sum_{n\in\N}{M_{-n}(\omega)}e^{-Cn}=\infty$ for some $C>\mathbb{E}[\log\rho_0]>0$, then $\sum_{n>(0\vee -z)}P_{0,\omega}[A_n^c(z)]=\infty$.
 \end{itemize}
\end{lemma}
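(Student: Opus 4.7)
The plan is to derive an exact quenched expression for $P_{0,\omega}[A_n^c(z)]$ and then compare it with the series $\sum_n M_{-n}\prod_{j=1}^{n-1}\rho_{-j}^{-1}$ via the asymptotics of $\Pi_n:=\prod_{j=1}^n\rho_{-j}$ forced by $\mathbb{E}[\log\rho_0]>0$.

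First I would decompose what happens after the first visit of the walk to $-n$, which is $P_{0,\omega}$-a.s.\ finite by Lemma~\ref{lemma3*}. At that moment all cookies on sites $-n+1,\dots,-1$ have been consumed, so while the walk stays in $\{-n,-n+1,\dots\}$ it behaves as an ordinary RWRE, except that each of the first $M_{-n}$ visits to $-n$ forces a jump to $-n+1$. By Lemma~\ref{lemma:Tk<T0}, the quenched probability that an RWRE started at $-n+1$ hits $z$ before $-n$ is $r_n:=1/R_n$ with $R_n:=1+\sum_{l=-z+1}^{n-1}\prod_{j=l}^{n-1}\rho_{-j}$. Letting $f(k)$ denote the quenched probability of hitting $-n-1$ before $z$ starting from $-n$ with $k$ cookies left, the strong Markov property yields $f(k)=(1-r_n)f(k-1)$ for $k\ge 1$ and $f(0)=q_{-n}+p_{-n}(1-r_n)f(0)$, so that
\begin{equation*}
P_{0,\omega}[A_n(z)]=f(M_{-n})=\left(1-\frac{1}{R_n}\right)^{M_{-n}}\cdot\frac{q_{-n}R_n}{p_{-n}+q_{-n}R_n}.
\end{equation*}

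Next I would exploit the recursion $R_{n+1}=1+\rho_{-n}R_n$ together with the law of large numbers (available since $\mathbb{E}[|\log\rho_0|]<\infty$). Because $\mathbb{E}[\log\rho_0]>0$, $\Pi_n\to\infty$ exponentially and $R_n/\Pi_{n-1}$ converges $\mathbb{P}$-a.s.\ to a finite positive limit; hence $\mathbb{P}$-a.s.\ there exist (random) constants $0<c_1\le c_2<\infty$ and $n_0\in\N$ with $c_1\Pi_{n-1}\le R_n\le c_2\Pi_{n-1}$ for $n\ge n_0$, and $\sum_n 1/R_n<\infty$ a.s. Using $1-(1-1/R_n)^{M_{-n}}\le\min(1,M_{-n}/R_n)$, the inequality $1-ab\le(1-a)+(1-b)$, and $p_{-n}/(p_{-n}+q_{-n}R_n)\le 1/(\rho_{-n}R_n)=1/(R_{n+1}-1)\le 2/R_{n+1}$ for $n\ge n_0$, I obtain the upper bound
\begin{equation*}
P_{0,\omega}[A_n^c(z)]\le\min(1,M_{-n}/R_n)+2/R_{n+1};
\end{equation*}
and from $1-e^{-x}\ge\tfrac{1}{2}\min(1,x)$ combined with $(1-1/R_n)^{M_{-n}}\le e^{-M_{-n}/R_n}$ the matching lower bound $P_{0,\omega}[A_n^c(z)]\ge\tfrac{1}{2}\min(1,M_{-n}/R_n)$.

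To finish, for (i) the divergence of $\sum_n P_{0,\omega}[A_n^c(z)]$ together with the summability of $\sum_n 1/R_{n+1}$ forces $\sum_n\min(1,M_{-n}/R_n)=\infty$ and hence $\sum_n M_{-n}/R_n=\infty$; then $R_n\ge c_1\Pi_{n-1}$ gives $\sum_n M_{-n}/\Pi_{n-1}=\infty$. For (ii), $C>\mathbb{E}[\log\rho_0]$ together with the LLN yields $\Pi_{n-1}\le e^{Cn}$ eventually, so $\sum_n M_{-n}/\Pi_{n-1}\ge\sum_n M_{-n}e^{-Cn}=\infty$; combined with $R_n\le c_2\Pi_{n-1}$ this gives $\sum_n M_{-n}/R_n=\infty$, and therefore $\sum_n\min(1,M_{-n}/R_n)=\infty$, so the lower bound yields $\sum_n P_{0,\omega}[A_n^c(z)]=\infty$. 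The main obstacle I anticipate is the bookkeeping in the Markov calculation for $f(k)$, in particular verifying that the formula is valid uniformly in the sign of $z$ through the proper range of $l$ in the definition of $R_n$; the subsequent analytic estimates are then routine given Assumption~\ref{as_A}.
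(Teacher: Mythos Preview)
Your proof is correct and follows essentially the same route as the paper: both derive the exact quenched formula for $P_{0,\omega}[A_n(z)]$ via Lemma~\ref{lemma:Tk<T0} and the strong Markov property, then sandwich $P_{0,\omega}[A_n^c(z)]$ between expressions comparable to $M_{-n}/\prod_j\rho_{-j}$ using Bernoulli-type inequalities together with the law of large numbers for $\sum_j\log\rho_{-j}$. Your organizing device of showing that $R_n/\Pi_{n-1}$ converges a.s.\ to a finite positive limit is a mild repackaging of the paper's direct LLN estimates, and your use of $1-e^{-x}\ge\tfrac12\min(1,x)$ in part~(ii) replaces the paper's appeal to the equivalence $\prod_n a_n>0\Leftrightarrow\sum_n(1-a_n)<\infty$, but these are cosmetic variations on the same argument.
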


\begin{proof}
Let $z\in\Z$. Lemma \ref{lemma:Tk<T0} implies for $n> (0\vee -z)$
\begin{align}
\label{lemma:P_An}
 P_{0,\omega}[A_n(z)]
  &= \left(1-\frac{1}{1+\sum_{l=-z+1}^{n-1}\prod_{j=l}^{n-1}\rho_{-j}}\right)^{M_{-n}} \left(1-\frac{1}{1+\sum_{l=-z+1}^{n}\prod_{j=l}^{n}\rho_{-j}}\right).
\end{align}

According to the law of large numbers, we can state for some $\epsilon>0$ and $\mathbb{P}$-a.e.\ $\omega$ that for $n$ large enough
\begin{equation}
\label{eq:2}
 \frac{1}{\prod_{j=-z+1}^{n}{\rho_{-j}}}=e^{\left(-\sum_{j=-z+1}^{n}\log \rho_{-j}\right)}\leq e^{-\epsilon (n+z)}
\end{equation}
and consequently
\begin{equation}
\label{eq:convergence}
 \sum_{n> (0\vee -z)}\frac{1}{\prod_{j=-z+1}^{n}{\rho_{-j}}}<\infty.
\end{equation}

Let us fix such an environment $\omega\in\Omega$ and let $n_0(\omega)\in\N$ with $n_0(\omega)> -z$ such that (\ref{eq:2}) holds for all $n\geq n_0(\omega)$. Then, we have for $n\geq n_0(\omega)$ by (\ref{lemma:P_An}) that
\begin{align*}
 P_{0,\omega}[A_n(z)] &\geq \left(1-\frac{1}{\prod_{j=-z+1}^{n-1}{\rho_{-j}}}\right)^{M_{-n}}\cdot \left(1-\frac{1}{\prod_{j=-z+1}^{n}{\rho_{-j}}}\right)  \\
  &\geq \left(1-\frac{{M_{-n}}}{\prod_{j=-z+1}^{n-1}{\rho_{-j}}}\right)\cdot \left(1-\frac{1}{\prod_{j=-z+1}^{n}{\rho_{-j}}}\right)
\end{align*}
and therefore,
\begin{align*}
  \sum_{n\geq n_0(\omega)} P_{0,\omega}[A_n^c(z)]&\leq\sum_{n\geq n_0(\omega)}{\left(1-\left(1-\frac{{M_{-n}}}{\prod_{j=-z+1}^{n-1}{\rho_{-j}}}\right)\cdot \left(1-\frac{1}{\prod_{j=-z+1}^{n}{\rho_{-j}}}\right)\right)}  \\
  &= \sum_{n\geq n_0(\omega)}{\left(\frac{1}{\prod_{j=-z+1}^{n}{\rho_{-j}}}+\frac{{M_{-n}}}{\prod_{j=-z+1}^{n-1}{\rho_{-j}}}\left(1-\frac{1}{\prod_{j=-z+1}^{n}{\rho_{-j}}}\right)\right)} \\
  &\leq \sum_{n\geq n_0(\omega)}{\left(\frac{1}{\prod_{j=-z+1}^{n}{\rho_{-j}}}+\frac{{M_{-n}}}{\prod_{j=-z+1}^{n-1}{\rho_{-j}}}\right)}.
\end{align*}
Hence, the first statement of the lemma follows by (\ref{eq:convergence}).

For the second part let $C>\mathbb{E}[\log\rho_0]>0$. Due to (\ref{lemma:P_An}), we get for $n\in\N$, $n> -z$
\begin{equation*}
 P_{0,\omega}[A_n(z)] \leq 
\left(1-\frac{1}{\rho_z\cdot\ldots\cdot\rho_{-n+1}\cdot\sum_{j=-z}^{n-1}{{\rho_z^{-1}\cdot\ldots\cdot\rho_{-j}^{-1}}}}\right)^{M_{-n}}.
\end{equation*}

By (\ref{eq:convergence}) and by applying the law of large numbers, there are $\mathbb{P}$-a.s.\ constants $c,\tilde{c}>0$ such that for large $n$,
\begin{align*}
 P_{0,\omega}[A_n(z)]  &\leq \big(1-e^{(-\sum_{j=-z}^{n-1}{\log \rho_{-j}})}\cdot c\big)^{M_{-n}} \\
 &\leq \left(1-{c\cdot e^{-C (n+z)}}\right)^{M_{-n}} \\
  &= \left(\left(1-{c\cdot e^{-z}\cdot e^{-Cn}}\right)^{e^{Cn}}\right)^{{M_{-n}}{e^{-Cn}}} \\
  &\leq e^{-\tilde{c}{M_{-n}} {e^{-Cn}}}.
\end{align*}

The second assertion of the lemma is obtained by the equivalence of
\[\prod\limits_{n\in\N}{a_n}>0 \quad\Leftrightarrow\quad \sum\limits_{n\in\N}{(1-a_n)}<\infty,\]
if $0< a_n \leq 1$ for all $n\geq1$.
\end{proof}

In order to prove Proposition \ref{RWRE_ERW_left}, we have to deal with random power series.
An application of \cite[Theorem 5.4.1]{Lukacs75} yields for our case the following result.
\begin{lemma}
\label{lemma_powerseries}
If $\mathbb{E}[(\log M_{-1})_+]=\infty$, then $\sum_{n \geq 1}{M_{-n}{x^n}}=\infty$ $\mathbb{P}$-a.s.\ for every ${x}>0$.\\
If $\mathbb{E}[(\log M_{-1})_+]<\infty$, then $\sum_{n \geq 1}{M_{-n}{x^n}}<\infty$ $\mathbb{P}$-a.s.\ for $0<{x}<1$.
\end{lemma}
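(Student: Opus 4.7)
The plan is to recognize that, under the hypothesis $(p_z,M_z)_{z\in -\N}$ i.i.d., the coefficients $(M_{-n})_{n\geq 1}$ are non-negative i.i.d.\ random variables, so the statement reduces to computing the radius of convergence of a random power series with i.i.d.\ coefficients. The key input is Borel--Cantelli applied to $\log M_{-n}$, which is essentially the content of the cited \cite[Theorem 5.4.1]{Lukacs75}.

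For the convergence part, suppose $\mathbb{E}[(\log M_{-1})_+]<\infty$ and fix $\epsilon>0$. The tail-integral identity $\mathbb{E}[(\log M_{-1})_+]=\int_0^\infty \mathbb{P}[(\log M_{-1})_+>t]\,dt$ compared to the Riemann sum $\epsilon\sum_{n\geq 1}\mathbb{P}[(\log M_{-1})_+>n\epsilon]$ shows that this series is finite. Since the $M_{-n}$ are identically distributed, the first Borel--Cantelli lemma yields $\log M_{-n}\leq n\epsilon$ for all but finitely many $n$, hence $\limsup_{n\to\infty} M_{-n}^{1/n}\leq e^\epsilon$ almost surely. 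Intersecting over a countable sequence $\epsilon\downarrow 0$ gives $\limsup_{n\to\infty} M_{-n}^{1/n}\leq 1$ almost surely, so $\sum_{n\geq 1}M_{-n}x^n$ has radius of convergence at least $1$ and converges $\mathbb{P}$-a.s.\ for each $x\in(0,1)$.

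For the divergence part, suppose $\mathbb{E}[(\log M_{-1})_+]=\infty$. Then for every $\epsilon>0$ the same tail comparison gives $\sum_{n\geq 1}\mathbb{P}[(\log M_{-1})_+>n\epsilon]=\infty$, and since $(M_{-n})_{n\geq 1}$ is i.i.d., the second Borel--Cantelli lemma yields $\log M_{-n}>n\epsilon$ infinitely often almost surely. Intersecting over $\epsilon\in\N$ produces an event of full measure on which $\limsup_{n\to\infty}(\log M_{-n})/n=\infty$. On this event, for every $x>0$ one has $\log(M_{-n}x^n)=\log M_{-n}+n\log x\to\infty$ along a subsequence, so the general term of $\sum_{n\geq 1}M_{-n}x^n$ does not tend to $0$ and the series diverges.

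There is no serious obstacle; the only point requiring care is the order of quantifiers, ensuring that the statements \emph{hold for all $x$ simultaneously} on a single event of full measure, which is achieved by the countable intersections over $\epsilon\in\Q_{>0}$ in both directions. Everything else is the standard tail-integral/tail-sum equivalence together with the root-test characterization of the radius of convergence.
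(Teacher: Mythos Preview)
Your argument is correct and is precisely the standard Borel--Cantelli/root-test computation underlying the result the paper cites; the paper itself gives no argument beyond invoking \cite[Theorem~5.4.1]{Lukacs75}, so you have simply supplied the proof of that theorem in this setting. The one hypothesis you should make explicit is that the second Borel--Cantelli step uses independence of the $M_{-n}$, which is guaranteed by Assumption~\ref{as_A2} (or the i.i.d.\ hypothesis of Proposition~\ref{RWRE_ERW_left}).
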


Gathering the results above we get Proposition \ref{RWRE_ERW_left}.

\begin{proof}[Proof of Proposition \ref{RWRE_ERW_left}]
Let $\mathbb{E}[(\log M_{-1})_+]<\infty$. Lemma \ref{lemma_powerseries} and $(\ref{eq:2})$ yield
\[\sum_{n\in\N}\frac{M_{-n}}{\prod_{j=1}^{n-1}\rho_{-j}}<\infty \quad \mathbb{P}\text{-a.s.}\]
Lemma \ref{lemma2} $(i)$ implies 
$\sum_{n>(0\vee -z)}P_{0,\omega}[A_n^c(z)]<\infty$
and therefore, by (\ref{eq:limsupA_n}) and (\ref{eq_Borel_Cantelli}),
$P_{0}[S_n<z \text{ eventually}]=1$ for all $z\in\Z$. Hence, $\lim_{n\to\infty}S_n= -\infty$ $P_0$-a.s.

Let $C>\mathbb{E}[\log \rho_0]$.
If $\mathbb{E}[(\log M_{-1})_+]=\infty$, then $\sum_{n\in\N}M_{-n} e^{-Cn}=\infty$ $\mathbb{P}$-a.s.\ according to Lemma \ref{lemma_powerseries}.
Therefore,
$\sum_{n>(0\vee -z)}P_{0,\omega}[A_n^c(z)]=\infty$ is obtained $\mathbb{P}$-a.s.\ for all $z\in \Z$ by Lemma \ref{lemma2} $(ii)$.
Hence, $(S_n)_{n\geq0}$ hits each $z\in\Z$ infinitely many times $P_0$-a.s., see (\ref{eq_Borel_Cantelli}), and the proposition follows.
\end{proof}

\section{Excited random walk in random environment}
\label{sec:ERWRE}
Let us now prove our main Theorem \ref{theorem1} about recurrence and transience of a random walk 
in a left-transient random environment with cookies of strength 1.

\begin{proof}
The results from Sections \ref{sec:pos_halfline} and \ref{sec:neg_halfline} are employed even though the environments $\omega$ are not exactly the same. If we consider excursions to the right of 0, we apply Section \ref{sec:pos_halfline}, as long as cookies on negative integers have no influence on the behaviour of the random walker. In the case where the behaviour of the walker on the negative integers is studied we use Section \ref{sec:neg_halfline} since the cookies on the right side of 0 can be neglected in that case.

Let $\mathbb{E}[(\log M_0)_+]<\infty$.
Proposition \ref{RWRE_ERW_left} $(i)$ gives us the following for $\mathbb{P}$-a.e.\ $\omega$. If we remove all cookies on the positive integers then $\lim_{n\to\infty}S_n=-\infty$ $P_{0,\omega}$-a.s.
On the other hand, $\mathbb{E}[(\log M_0)_+]<\infty$ implies $\limsup_{t\to\infty}(t\cdot\mathbb{P}[\log{M_0}>t])=0$ since 
\begin{align*}
 \mathbb{E}[(\log M_0)_+]&=\int_0^{\infty}\mathbb{P}[\log{M_0}>t]dt\\
&\geq \sum_{n\in\N}(t_n-t_{n-1})\cdot \mathbb{P}[\log{M_0}>t_n]\\
&\geq \frac{1}{2}\sum_{n\in\N}t_n\cdot \mathbb{P}[\log{M_0}>t_n]
\end{align*}
for every sequence $(t_n)_{n\geq1}$ with $t_n-t_{n-1}\geq \frac{1}{2} t_n >0$. Hence, every excursion to the right is $P_{0,\omega}$-a.s.\ finite for $\mathbb{P}$-a.e.\ $\omega$ by Proposition \ref{theorem:ERWRE_right} $(ii)$, Lemma \ref{lem:Divergenz} and Lemma \ref{lem:finite_excursion}. Therefore, all cookies on positive sites that are visited by the random walker are removed. Consequently, the ERWRE tends to $-\infty$ $P_{0,\omega}$-a.s.\ for $\mathbb{P}$-a.e.\ $\omega$.

We turn to the second case. Excursions to the left of 0 depend only on the environment left of 0, i.e.\ on $((\omega(x,i))_{i\in\N})_{x\in -\N}$. Since $\mathbb{E}[(\log M_0)_+]=\infty$, every excursion to the left is $P_0$-a.s.\ finite and the ERWRE returns to 0 by Proposition \ref{RWRE_ERW_left} $(ii)$. On the other hand, excursions to the right only depend on the environment right of 0. Since $\limsup_{t\to\infty}(t\cdot\mathbb{P}[\log{M_0}>t])<\mathbb{E}[\log(\rho_0)]$, every excursion to the right is $P_0$-a.s.\ finite by Proposition \ref{theorem:ERWRE_right} $(ii)$, Lemma \ref{lem:Divergenz} and Lemma \ref{lem:finite_excursion}. Hence, the process returns to 0 infinitely many times $P_0$-a.s.

In the last case, we have $\mathbb{E}[(\log M_0)_+]=\infty$. Since cookies only enforce the drift to the right $\mathbb{E}[(\log M_0)_+]=\infty$ implies that $P_0[\limsup_{n\to\infty} S_n=+\infty]=1$ by Proposition \ref{RWRE_ERW_left} $(ii)$. By the assumption $\liminf_{t\to\infty}(t\cdot\mathbb{P}[\log{M_0}>t])>\mathbb{E}[\log(\rho_0)]$ and by Proposition \ref{theorem:ERWRE_right} $(i)$, we obtain $P_0[S_n\to +\infty]>0$. Furthermore, we also have $P_0[S_n\to +\infty, B]>0$, where $B:=\{S_n >S_0 \;\forall n\geq 1\}$.\\
Applying L\'evy's 0-1 law, we get with the canonical filtration $(\mathcal{F}_n)_{n\geq 0}$ of $(S_n)_{n\geq 0}$
for $\mathbb{P}$-a.e.\ $\omega$
\begin{equation}
\label{eq:Levys}
 P_{0,\omega}[S_n\to\infty|\mathcal{F}_{T_n}]\overset{n\to\infty}{\longrightarrow}\mathbbmss{1}_{\{S_n\to\infty\}}\quad P_{0,\omega}\text{-a.s.}
\end{equation}
Furthermore, we see that for all $n\in\N$
\begin{align}
\label{eq:Levys_calc}
 P_{0,\omega}[S_n\to\infty|\mathcal{F}_{T_n}]\geq P_{0,\omega}[S_n\to\infty, S_{T_n +k}>n \;\forall k\in\N|\mathcal{F}_{T_n}]
=P_{n,\omega}[S_n\to \infty, B].
\end{align}
The environment $(\omega(z, \cdot))_{z\in\Z}$ is i.i.d.\ under $\mathbb{P}$. Thus, the ergodic theorem yields
\begin{equation}
\label{eq:ergodic}
 \frac{1}{m+1} \sum_{n=0}^{m}P_{n,\omega}[S_n\to +\infty, B]\overset{m\to\infty}{\longrightarrow}\mathbb{E}[P_{0,\omega}[S_n\to +\infty, B]]>0 \quad \mathbb{P}\text{-a.s.}
\end{equation}
As a consequence, we get for $\mathbb{P}$-a.e.\ $\omega$ that $P_{n,\omega}[S_n\to +\infty, B]>\epsilon$ for infinitely many $n$ and for some $\epsilon >0$.
Finally, (\ref{eq:Levys}), (\ref{eq:Levys_calc}) and (\ref{eq:ergodic}) yield that $\mathbbmss{1}_{\{S_n\to\infty\}}=1$ $P_{0,\omega}$-a.s.\ for $\mathbb{P}$-a.e.\ $\omega$ and therefore, $P_0[S_n\to +\infty]=1$.
\end{proof}

\begin{remark}
Theorem \ref{theorem1} is still correct if $\mathbb{P}[M_0=\infty]>0$ and $\mathbb{P}[M_0=0]>0$ instead of Assumption \ref{as_A4}.
\end{remark}

In conclusion, we give some examples for the three cases of Theorem \ref{theorem1}. They have been motivated by and adapted from \cite[Theorem 1]{ZeeviGlynn}.

\begin{example}
 Let the assumptions of Theorem \ref{theorem1} be satisfied and require $\lambda, \beta >0$. We set 
\begin{align*}
\mathbb{P}[M_0\geq k]&:=\frac{1}{(1+\beta \log k)^{\lambda}}\quad \text{ for } k\geq2,\; k\in\N, \\
\mathbb{P}[M_0=1]&:=0,\\
\mathbb{P}[M_0=0]&:=1-\frac{1}{(1+\beta \log 2)^{\lambda}}.
\end{align*}

Now the following cases can be derived.
\begin{itemize}
 \item[(i)] If $\lambda >1$, then $P_0[\lim_{n\to\infty}S_n= -\infty]=1$.
 \item[(ii)] If $\lambda <1$, then $P_0[\lim_{n\to\infty}S_n= +\infty]=1$.
 \item[(iii)] If $\lambda =1$ and $\beta\cdot \mathbb{E}[\log \rho_0]<1$, then $P_0[\lim_{n\to\infty}S_n= +\infty]=1$. If $\lambda =1$ and $\beta\cdot \mathbb{E}[\log \rho_0]>1$, then $P_0[S_n= 0 \text{ infinitely often}]=1$.
\end{itemize}
\end{example}

\begin{proof}
These results are due to Theorem \ref{theorem1} and due to the following calculations.\\
In the first case we have $\mathbb{E}[(\log M_0)_+]\leq \sum_{k\in\N_0}\mathbb{P}[\log M_0\geq k]<\infty$ since $\lambda>1$.\\
To prove the second case, note that $\mathbb{P}[\log M_0\geq t]\geq (1+\beta \log (e^t+1))^{-\lambda}$ for $t\geq 0$ and therefore $\liminf_{t\to\infty}t\: \mathbb{P}[\log M_0\geq t]=\infty$ if $\lambda<1$.\\
Finally, if $\lambda =1$ we obtain that $\mathbb{E}[(\log M_0)_+]\geq \sum_{k\in\N}\mathbb{P}[\log M_0\geq k]=\infty$ and $\lim_{t\to\infty}t\: \mathbb{P}[\log M_0\geq t]=\frac{1}{\beta}$. According to the value of $\beta$ the ERWRE tends to $+\infty$ or is recurrent $P_0$-a.s.
\end{proof}

\section*{Acknowledgements}

This work was funded by ERC Starting Grant 208417-NCIRW.\\
Many thanks go to my advisor Prof.\ Martin P.\ W.\ Zerner for his helpful suggestions and his support. I thank Elmar Teufl and Johannes Rue{\ss}
for inspiring discussions.
Furthermore, let me mention here that this paper is an extension of my diploma thesis, written at Eberhard Karls Universit\"at T\"ubingen.

\bibliographystyle{amsplain} 
\bibliography{bibfile}

\parindent=0pt 

\end{document}